\title{H\"older-Topology of the Heisenberg group}
\author{Armin Schikorra}
\address[Armin Schikorra]{Department of Mathematics,
University of Pittsburgh,
301 Thackeray Hall,
Pittsburgh, PA 15260, USA}
\email{armin@pitt.edu}
\def\eps{\varepsilon}
\def\id{{\rm id\, }}
\def\B{{\mathbb B}}
\def\M{{\mathcal M}}
\def\N{{\mathbb N}}
\def\H{{\mathbb H}}
\def\S{{\mathbb S}}
\newtheorem{theorem}{Theorem}
\newtheorem{lemma}[theorem]{Lemma}
\newtheorem{proposition}[theorem]{Proposition}
\theoremstyle{definition}
\newtheorem{remark}[theorem]{Remark}
\newtheorem{definition}[theorem]{Definition}
\newtheorem{conjecture}[theorem]{Conjecture}
\def\lip{{\rm Lip\,}}
\def\rank{{\rm rank\,}}
\def\supp{{\rm supp\,}}
\newcommand{\R}{\mathbb{R}}
\newcommand{\HI}{\mathcal{H}}
\newcommand{\Z}{\mathbb{Z}}
\newcommand{\brac}[1]{\left (#1 \right )}
\newcommand{\Ep}{\bigwedge\nolimits}
\newcommand{\barint}{
\rule[.036in]{.12in}{.009in}\kern-.16in \displaystyle\int }
\newcommand{\barcal}{\mbox{$ \rule[.036in]{.11in}{.007in}\kern-.128in\int $}}
\def\mvint_#1{\mathchoice
          {\mathop{\vrule width 6pt height 3 pt depth -2.5pt
                  \kern -8pt \intop}\nolimits_{\kern -3pt #1}}%
          {\mathop{\vrule width 5pt height 3 pt depth -2.6pt
                  \kern -6pt \intop}\nolimits_{#1}}%
          {\mathop{\vrule width 5pt height 3 pt depth -2.6pt
                  \kern -6pt \intop}\nolimits_{#1}}%
          {\mathop{\vrule width 5pt height 3 pt depth -2.6pt
                  \kern -6pt \intop}\nolimits_{#1}}}
\numberwithin{theorem}{section} \numberwithin{equation}{section}
\newcommand{\lap}{\Delta }
\newcommand{\aleq}{\lesssim}
\begin{document}
\begin{abstract}
The Heisenberg groups are examples of sub-Riemannian manifolds homeomorphic, but not diffeomorphic to the Euclidean space. Their metric is derived from curves which are only allowed to move in so-called horizontal directions.

We report on some recent progress in the Analysis of the H\"older topology of the Heisenberg group, some related and some unrelated to density questions for Sobolev maps into the Heisenberg group.

In particular we describe the main ideas behind a result by Haj\l{}asz, Mirra, and the author regarding Gromov's conjecture, which is based on the linking number. We do not prove or disprove the Gromov Conjecture.
\end{abstract}

\maketitle 
\tableofcontents

\section{\texorpdfstring{$H=W$}{H=W}-problem for maps into manifolds and the role of topology}
When are Sobolev maps into manifolds \emph{essentially} smooth? 

To study this question, let us be more precise. Let $\mathcal{N} \subset \R^N$ be a (for presentations sake) smooth, compact Riemannian manifold without boundary embedded in the Euclidean space $\R^N$. There are two ways to define the Sobolev space of maps from a $d+1$-dimensional domain $\Omega \subset \R^{d+1}$ into $\mathcal{N}$.
\begin{enumerate}
 \item (The restriction space) Let $W^{1,p}(\Omega,\mathcal{N})$ be all maps $u \in W^{1,p}(\Omega,\R^N)$ so that $u(x) \in \mathcal{N}$ for almost every $x \in \Omega$.
 \item (The functional analytic space) Let $C^{\infty}(\Omega,\mathcal{N})$ be all smooth functions maps from $\Omega$ into $\mathcal{N}$. Set 
 \[
  H^{1,p}(\Omega,\mathcal{N}) := \operatorname{closure}_{\|\cdot\|_{W^{1,p}}} (C^{\infty}(\Omega,\mathcal{N})),
 \]
to be the closure of $C^{\infty}(\Omega,\mathcal{N})$ under the $W^{1,p}$-norm
\[
 \|f\|_{W^{1,p}} = \brac{\|f\|_{L^p}^p + \|Df\|_{L^p}^p}^{\frac{1}{p}}.
\]
\end{enumerate}
The above question can then be translated into: when is $H = W$, i.e. $H^{1,p}(\Omega,\mathcal{N}) = W^{1,p}(\Omega,\mathcal{N})$?

(Actually, it is always true the $H^{1,p}(\Omega,\mathcal{N}) \subset W^{1,p}(\Omega,\mathcal{N})$, the other direction is the interesting one).

The answer of this question depends on the topology of $\mathcal{N}$, and also of $\Omega$. More precisely, the homotopy classes of $\mathcal{N}$ play a role. 
\begin{definition}
Two continuous maps $f,g: \S^k \to \mathcal{N}$ belong to the same homotopy class (which we denote as elements of $\pi_k(\mathcal{N})$) if they can be continuously transformed into one another, that is there exists a homotopy $H: [0,1] \times \S^k \to \mathcal{N}$ which is continuous and $H(0,\cdot) = f(\cdot)$ and $H(1,\cdot) = g(\cdot)$. It is easy to check that this induces an equivalence class.

We say that $\pi_k(\mathcal{N})$ is trivial, $\pi_k(\mathcal{N}) = \{0\}$ if and only if for any continuous (or smooth) map $f: \S^k \to \mathcal{N}$ there exists an extended map $F: \overline{\B^{k+1}} \to \mathcal{N}$ such that
\begin{enumerate}
 \item $F$ is continous/smooth on $\overline{\B^{k+1}}$
 \item $F \big |_{\partial \B^{k+1}} = f$.
\end{enumerate}
That is to say, any map is (continously/smoothly) deformable to a constant map.
\end{definition}

The following is the celebrated result by Bethuel \cite{Bethuel-1991}, see also
\cite{Bethuel-Zheng-1988,Hang-Lin-2003}.

\begin{theorem}[\cite{Bethuel-1991}]\label{th:HWBethuel}
Let $\Omega = \B^{d+1} \subset \R^{d+1}$ be a ball
\begin{itemize}
 \item Assume $p \geq d+1$. Then $W^{1,p}(\B^{d+1},\mathcal{N}) = H^{1,p}(\B^{d+1},\mathcal{N})$.
 \item Assume $1 \leq p < d+1$. Then $W^{1,p}(\B^{d+1},\mathcal{N}) = H^{1,p}(\B^{d+1},\mathcal{N})$ if and only if $\pi_{\lfloor p\rfloor} \mathcal{N} = \{0\}$.
\end{itemize}
\end{theorem}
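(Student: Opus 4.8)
The plan is to separate the two regimes, the unifying mechanism being that the only obstruction to smooth approximation is carried by the restriction of a map to small $\lfloor p\rfloor$-spheres that ``link'' a topological singularity of codimension $\lfloor p\rfloor+1$. The inclusion $H^{1,p}\subset W^{1,p}$ being free, I only have to decide when the reverse inclusion holds.

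First, the easy regime $p\ge d+1$. Here I would exploit that Sobolev maps are continuous up to the scale of their oscillations. For $p>d+1$ the Morrey embedding $W^{1,p}(\B^{d+1})\hookrightarrow C^{0,\alpha}$ gives honest continuity; for the borderline $p=d+1$ one only gets $u\in\mathrm{VMO}$, but the Poincar\'e inequality still forces the mean oscillation of $u$ on small balls to be small. In both cases the mollifications $u_\eps$ take values in a fixed tubular neighborhood $U$ of the compact manifold $\mathcal{N}$ once $\eps$ is small, so that the smooth maps $\Pi\circ u_\eps$, with $\Pi:U\to\mathcal{N}$ the smooth nearest-point projection, are well defined and converge to $u$ in $W^{1,p}$. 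This gives $W^{1,p}=H^{1,p}$ with no topological hypothesis, consistent with the fact that the would-be singular set has negative dimension $d-\lfloor p\rfloor<0$.

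Second, the sufficiency half of the subcritical regime: assuming $\pi_{\lfloor p\rfloor}(\mathcal{N})=\{0\}$, I would approximate in two stages. Stage one is the technical core: show every $u\in W^{1,p}(\B^{d+1},\mathcal{N})$ is a $W^{1,p}$-limit of maps that are smooth away from a finite union $\Sigma$ of $(d-\lfloor p\rfloor)$-dimensional polyhedra, with $|Du(x)|\lesssim \dist(x,\Sigma)^{-1}$ near $\Sigma$. To do this I mollify, cover $\B^{d+1}$ by a fine grid of cubes, select by a Fubini/averaging argument a translate of the grid on whose $\lfloor p\rfloor$-skeleton the energy of $u$ is controlled, project the mollified map onto $\mathcal{N}$ where its oscillation is small, and on the remaining cubes extend $0$-homogeneously from the center of each cube inward; since $p<\lfloor p\rfloor+1$, this radial extension has finite, controllable $W^{1,p}$-energy. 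Stage two removes the singularities: transverse to $\Sigma$ such a map is, up to homotopy, the $0$-homogeneous extension of some $\omega:\S^{\lfloor p\rfloor}\to\mathcal{N}$, and because $\pi_{\lfloor p\rfloor}(\mathcal{N})=\{0\}$ this $\omega$ is null-homotopic, hence admits a smooth extension $F:\overline{\B^{\lfloor p\rfloor+1}}\to\mathcal{N}$. Replacing, in a tube of radius $\rho$ around $\Sigma$, the singular radial map by a rescaling of $F$ produces a genuinely smooth map, and both the old and the new energies in the tube are of order $\rho^{(\lfloor p\rfloor+1)-p}\to 0$, again because $p<\lfloor p\rfloor+1$. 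This yields smooth approximants, so $u\in H^{1,p}$.

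Third, the necessity half: assuming $\pi_{\lfloor p\rfloor}(\mathcal{N})\neq\{0\}$, I would exhibit a non-approximable map. Fix a nontrivial class represented by $\omega:\S^{\lfloor p\rfloor}\to\mathcal{N}$ and set $u(x)=\omega(y/|y|)$, where $y\in\R^{\lfloor p\rfloor+1}$ are the coordinates transverse to a fixed $(d-\lfloor p\rfloor)$-plane; then $\int|Du|^p\sim\int|y|^{-p}\,dy<\infty$ since $p<\lfloor p\rfloor+1$, so $u\in W^{1,p}(\B^{d+1},\mathcal{N})$. Suppose smooth $u_j\to u$ in $W^{1,p}$. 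Each $u_j$ restricts, on the boundary $\S^{\lfloor p\rfloor}$ of a transverse disk, to a null-homotopic map, because $u_j$ extends smoothly across that disk; hence $[u_j|_{\S^{\lfloor p\rfloor}}]=0$. On the other hand, by Fubini the restrictions $u_j|_{\S^{\lfloor p\rfloor}}$ converge in $W^{1,p}(\S^{\lfloor p\rfloor})$ to $u|_{\S^{\lfloor p\rfloor}}=\omega$ for almost every transverse sphere, and since $W^{1,p}(\S^{\lfloor p\rfloor})\hookrightarrow C^0$ for $p>\lfloor p\rfloor$ (respectively $\hookrightarrow\mathrm{VMO}$, with Brezis--Nirenberg degree stability, in the integer case $p=\lfloor p\rfloor$) the homotopy class passes to the limit. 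Thus $0=[u_j|_{\S^{\lfloor p\rfloor}}]\to[\omega]\neq 0$, a contradiction, so $u\notin H^{1,p}$ and $H^{1,p}\subsetneq W^{1,p}$. I expect the principal difficulty to be Stage one of the sufficiency argument, where the grid translate must be chosen so that \emph{simultaneously} the skeleton energies are finite and the homogeneous extensions are admissible; the integer borderline $p=\lfloor p\rfloor$ in the necessity argument is a close second, since it forces one to replace plain continuity by $\mathrm{VMO}$ degree theory.
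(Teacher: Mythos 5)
Your proposal is correct as a proof outline, but it covers considerably more than what the paper itself does: the paper cites Bethuel (and Schoen--Uhlenbeck for $p\geq d+1$) for the theorem and only sketches the necessity direction in the top range $p\in(d,d+1)$, where $\lfloor p\rfloor=d$ and the slicing sphere has codimension one in the ball. On that overlap your argument is the same as the paper's: the radial extension $\Phi(x)=\varphi(x/|x|)$, a Fubini-type selection of a good sphere $r\S^{d}$ on which the smooth approximants converge in $W^{1,p}$, the supercritical embedding $W^{1,p}(r\S^{d})\hookrightarrow C^{0}$, and stability of the homotopy class under uniform convergence, giving the contradiction between the trivial classes $[\Phi_k|_{r\S^d}]$ and the nontrivial limit class. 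What you add, faithfully to the literature, is: (i) the transverse-coordinate version $u(x)=\omega(y/|y|)$ needed when $\lfloor p\rfloor<d$, where the singular set is a $(d-\lfloor p\rfloor)$-plane rather than a point; (ii) the integer borderline $p=\lfloor p\rfloor$, where you correctly replace $C^0$ by $\mathrm{VMO}$ and invoke Brezis--Nirenberg stability of homotopy classes (and indeed $W^{1,p}(\S^{p})\hookrightarrow \mathrm{BMO}$ via Poincar\'e, so $W^{1,p}$-convergence of the slices does give the needed BMO-closeness); (iii) the Schoen--Uhlenbeck mollify-and-project argument for $p\geq d+1$, with VMO-smallness of oscillations at the endpoint $p=d+1$ guaranteeing the mollifications land in the tubular neighborhood of $\mathcal{N}$; and (iv) Bethuel's grid/skeleton construction for the sufficiency half. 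Point (iv) is where your sketch is thinnest and where the real work of \cite{Bethuel-1991} lies: the density of maps smooth away from a $(d-\lfloor p\rfloor)$-dimensional polyhedron, with the simultaneous good-translate selection you flag as the principal difficulty, is also the part of Bethuel's original argument that required repair for general domains, cf.\ \cite{Hang-Lin-2003}, though for the ball as stated here it stands. In short, the paper's minimal sketch buys a quick illustration of why a nontrivial $\pi_{\lfloor p\rfloor}(\mathcal{N})$ obstructs density, which is all the survey needs; your outline buys the actual equivalence in both directions and all ranges, at the cost of leaning on the nontrivial skeleton approximation and on VMO homotopy theory as black boxes.
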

The first statement is actually due to Schoen-Uhlenbeck \cite{Schoen-Uhlenbeck-1983}. Many extensions, e.g. to more general Sobolev spaces, exists; see \cite{Hajlasz-1994,Riviere-2000,Mironescu-2004,Bousquet-Ponce-VanSchaftingen-2013,Brezis-Mironescu-2015,Bousquet-Ponce-VanSchaftingen-2015} and references within.

\emph{Why is the topology of the target manifold so important? How does a nontrivial homotopy group obstruct smooth approximation?}

Let us look at a special case:
\begin{proof}[Sketch of the ``only if'' argument for $p \in (d,d+1)$.]
Assume that $\pi_{\lfloor p\rfloor} \mathcal{N} \neq \{0\}$. Then we can find a (w.lo.g. smooth) map $\varphi : \S^{d} \to \mathcal{N}$ that is nontrivial as an element of $\pi_{d}(\mathcal{N})$. That is, \emph{any} map $\Phi: \B^{d+1} \to \mathcal{N}$ which coincides with $\varphi$ at the boundary $\Phi \big |_{\partial \B^{d+1}} = \varphi$ is necessarily \emph{discontinuous}.
So we cannot extend $\varphi$ to a \emph{continuous} map on $\B^{d+1}$. On the other hand, we can easily extend $\varphi$ to a map on $\B^{d+1}$ which is in $W^{1,p}(\B^{d+1},\mathcal{N})$. Simply take 
\[
 \Phi(x) := \varphi\left (\frac{x}{|x|} \right ) \in W^{1,q}(\B^{d+1},\mathcal{N})\quad  \forall 1 \leq q < d+1.
\]
This map $\Phi: \B^{d+1} \to \mathcal{N}$ cannot be approximated in $W^{1,p}$ by smooth functions $\Phi_k \in W^{1,p}(\B^{d+1},\mathcal{N})$. This is because of the topological restriction that $\Phi$ bridges, but continuous functions $\Phi_k$ cannot bridge. More precisely, assume that $\Phi_k \to \Phi$ in $W^{1,p}(\B^{d+1},\R^N)$. Essentially by Fubini's theorem we can find a radius $r \in (0,1)$ so that as functions restricted to the $r$-sphere $r \S^{d}$ we have convergence
\[
 \Phi_k\big |_{r \S^{d}}\xrightarrow{k \to \infty} \Phi\big |_{r \S^{d}} \quad \mbox{in $W^{1,p}(r\S^{d},\mathcal{N}$)}.
\]
In that case we have three facts:
\begin{enumerate}
 \item As an element of $\pi_k(\mathcal{N})$ the map $\Phi_k\big |_{r \S^{d}}: r \S^{d} \to \mathcal{N}$ is trivial: it can be \emph{continuously} extended to all of $\B^{d+1}$ by $\Phi_k$.
 \item As an element of $\pi_k(\mathcal{N})$ the map $\Phi\big |_{r \S^{d}}: r \S^{d} \to \mathcal{N}$ is nontrivial: $\Phi(rx) = \varphi(x)$ on $\S^{d}$.
 \item $\Phi_k\big |_{r \S^{d}} \xrightarrow{k \to \infty} \Phi\big |_{r \S^{d}}$ uniformly: Since $p > d$ and the latter is the dimension of the $r\S^{d}$, we have that $W^{1,p}(r\S^{d}) \subset C^0(r\S^{d})$.
\end{enumerate}
The third fact implies that for $k \in \N$ sufficiently large, $\Phi_k\big |_{r \S^{d}}$ and $\Phi\big |_{r \S^{d}}$ are the same as elements of $\pi_{d-1}(\mathcal{N})$, since they are uniformly close to each other. This makes (1) and (2) impossible, and we have a contradiction.
\end{proof}
A crucial ingredient in the argument above, albeit somewhat hidden, is that we can jump as we wish between smooth and continuous elements of the homotopy group $\pi_{d-1}(\mathcal{N})$. Behind this lies the following density fact, which follows directly from convolution arguments and the existence of a nearest-point projection $\Pi: B_\delta(\mathcal{N}) \to \mathcal{N}$ in a small tubular neighborhood of $B_\delta(\mathcal{N})$ for some $\delta > 0$.
\begin{lemma}\label{la:continuouslipschitz}
Let $u: \Omega \to \mathcal{N}$ be continuous ($C^\alpha$-H\"older continuous, $C^{0,1}$-Lipschitz continuous). Then we can approximate $u$ by smooth or Lipschitz-continuous $u_k$ so that $u_k$ converges to $u$ in $C^0$, ($C^\beta$ for $\beta < \alpha$,or $1$, respectively).
\end{lemma}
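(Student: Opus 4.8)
The plan is the classical \emph{mollify-then-project} argument, whose only nontrivial input is the smooth nearest-point projection $\Pi \colon B_\delta(\mathcal N)\to \mathcal N$ furnished by the tubular neighborhood of the smooth compact $\mathcal N$. First I would remove the boundary issue that convolution creates: since all three regularity classes extend (Tietze for continuous, and the standard H\"older/Lipschitz extension theorems for the others), I extend $u$ to a map $\tilde u$ of the same regularity on a neighborhood of $\overline\Omega$ with values in $\R^N$ — deliberately \emph{not} insisting on the constraint $\tilde u\in\mathcal N$ off $\Omega$. Fixing a standard mollifier $\rho\in C_c^\infty(\R^{d+1})$ with $\int\rho=1$ and $\rho_\eps(x)=\eps^{-(d+1)}\rho(x/\eps)$, I set $v_\eps:=\tilde u*\rho_\eps$, a smooth $\R^N$-valued map on $\Omega$.

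Second, I check that $v_\eps$ lands in the tubular shell so that the projection applies. Because $\tilde u=u\in\mathcal N$ on $\Omega$ and mollification is a local average, uniform continuity gives $\|v_\eps-u\|_{C^0(\Omega)}\to 0$, whence for all small $\eps$
\[
 \dist(v_\eps(x),\mathcal N)\le |v_\eps(x)-u(x)|<\delta\qquad\forall x\in\Omega .
\]
Thus $u_\eps:=\Pi\circ v_\eps$ is a well-defined \emph{smooth} map $\Omega\to\mathcal N$. As $\Pi$ is smooth, hence Lipschitz with some constant $L$ on the relevant compact shell, and $\Pi|_{\mathcal N}=\id$, we obtain
\[
 \|u_\eps-u\|_{C^0}=\|\Pi\circ v_\eps-\Pi\circ u\|_{C^0}\le L\,\|v_\eps-u\|_{C^0}\xrightarrow{\eps\to0}0 ,
\]
which settles the continuous case with $u_k:=u_{1/k}$.

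Third, for the H\"older case I would upgrade $C^0$-convergence to $C^\beta$-convergence by interpolation rather than by attempting the endpoint directly. The elementary mollification estimates give the uniform bound $[v_\eps]_{C^\alpha}\le[u]_{C^\alpha}$ and the rate $\|v_\eps-u\|_{C^0}\lesssim\eps^\alpha[u]_{C^\alpha}$, so the differences $w_\eps:=v_\eps-u$ obey $[w_\eps]_{C^\alpha}\lesssim[u]_{C^\alpha}$ (bounded) while $\|w_\eps\|_{C^0}\to0$. The interpolation inequality
\[
 [w]_{C^\beta}\lesssim \|w\|_{C^0}^{1-\beta/\alpha}\,[w]_{C^\alpha}^{\beta/\alpha}\qquad(0<\beta<\alpha)
\]
then forces $[w_\eps]_{C^\beta}\to0$, i.e.\ $v_\eps\to u$ in $C^\beta$ for every $\beta<\alpha$; composing with the smooth (in particular $C^{1,1}$ on the shell) map $\Pi$ preserves this, so $u_\eps\to u$ in $C^\beta$. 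The Lipschitz case is the endpoint $\alpha=1$: here $\mathrm{Lip}(v_\eps)\le\mathrm{Lip}(u)$ and $\mathrm{Lip}(u_\eps)\le L\,\mathrm{Lip}(u)$ are uniformly bounded, $u_\eps\to u$ uniformly, and the same interpolation yields convergence in $C^\beta$ for every $\beta<1$; if one only wants \emph{Lipschitz} (not smooth) approximants converging in the Lipschitz norm, the trivial choice $u_k:=u$ already suffices.

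Finally, I expect the only genuine subtlety — not a real obstacle — to be the H\"older endpoint: mollification does \emph{not} converge in the $C^\alpha$-seminorm, only in $C^\beta$ for $\beta<\alpha$, which is exactly why the statement is phrased with the strict inequality and why the interpolation step is indispensable. A secondary point requiring care is the composition estimate for $[\Pi\circ v_\eps-\Pi\circ u]_{C^\beta}$, which uses boundedness and Lipschitz continuity of both $\Pi$ and $D\Pi$ on the compact shell through which all the $v_\eps$ and $u$ pass; this is precisely where the smoothness of $\mathcal N$, hence of $\Pi$, is used.
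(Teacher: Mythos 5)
Your proposal is correct and is exactly the argument the paper has in mind: the lemma is stated there as following ``directly from convolution arguments and the existence of a nearest-point projection $\Pi: B_\delta(\mathcal{N}) \to \mathcal{N}$,'' which is precisely your mollify-then-project scheme, with your extension step and the interpolation inequality $[w]_{C^\beta}\lesssim \|w\|_{C^0}^{1-\beta/\alpha}[w]_{C^\alpha}^{\beta/\alpha}$ supplying the details the paper leaves implicit. You also correctly identify why the strict inequality $\beta<\alpha$ appears in the statement, so nothing is missing.
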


This fails if $\mathcal{N}$ becomes a metric space $X$. One example of a metric space with still a lot of ``smooth structure'' are the Heisenberg groups.

\section{A crude introduction to the Heisenberg group}
The metric of a manifold $X \subset \R^{N}$ could be described in the following way. To measure the distance between any two points $p, q \in X$, we take the minimum length of \emph{tangential} curves between $p$ and $q$ in $X$: a smooth curve $\gamma:[0,1] \to \R^N$ is a \emph{tangential} curve between $p$ and $q$ if
\begin{enumerate}
 \item $\gamma(0) = p$, $\gamma(1) = q$, and
 \item at any $t \in (0,1)$, the derivative of $\gamma$ belongs to the tangential space $T_{\gamma(t)} X$ of $X$.
\end{enumerate}
Of course, for $X$ a smooth embedded manifold this is just equivalent to saying that $\gamma(t)$ maps into the manifold $X$ at any point $t \in [0,1]$. So in some sense it is equivalent to see a manifold $X$ as a distribution of tangent planes $T_p X$. Why are the two points of view equivalent? It is essentially Frobenius theorem, the tangent plane distribution is integrable (see, e.g., \cite{Lang-1999}).

Now we drop the integrability condition of the (previously tangent) planes, and call them \emph{horizontal planes} $H_p X$: For any $p \in X$ define a linear space $H_p X \subset \R^N$. Let us also assume that the spaces $H_p X$ vary smoothly (in a suitable sense). We define a metric $d_X(p,q)$ as the minimal length of horizontal curves connecting $p$ and $q$. Horizontal curve connecting $p$ and $q$ simply means 
\begin{enumerate}
 \item $\gamma(0) = p$, $\gamma(1) = q$, and
 \item at any $t \in (0,1)$, the derivative of $\gamma$ belongs to the tangential space $T_{\gamma(t)} X$ of $X$.
\end{enumerate}
This metric is called the Carnot-Carath\'eodory metric, and $(X,HX,\R^N)$ is a \emph{sub-Riemannian}-manifold. Observe that in principle the distance between two points could be infinite.

One of the simplest (non-trivial) examples of a sub-Riemannian manifold is the Heisenberg group $\H_n$. We take $X = \R^{2n+1}$, and define 
\[\mathcal{L}(\gamma) = \int_0^1 |\gamma'(t)| \quad \mbox{for any absolutely continuous curve $\gamma: [0,1] \to X$.}
\]
At a point $p = (p_1,\ldots,p_{2n+1}) \in \R^{2n+1}$, the horizontal plane distribution is given by the kernel of a $1$-form $\alpha$, $H_p \H_n := \ker \alpha$, where
\begin{equation}\label{eq:alpha}
 \alpha := dp_{2n+1} + 2 \sum_{j=1}^n 
  (p_{2j}\, dp_{2j-1}- p_{2j-1}\, dp_{2j}).
\end{equation}
That is,
\[
 H_p \H_n = \left \{v = (v_1,\ldots,v_{2n+1}) \in \R^{2n+1}:\ v_{2n+1} + 2 \sum_{j=1}^n 
  (p_{2j}\,v_{2j-1}- p_{2j-1}\, v_{2j}) = 0 \right \}.
\]

One can show that under the resulting Carnot-Carath\'eodory metric $d_{cc}$ the metric space $(\H_n,d_{cc})$ is connected. Also, $d_{cc}$ is equivalent to the so-called Kor\'anyi-metric $d_{\H_n}$ which we shall use from now on. It is defined as follows: for $p = (p_1,\ldots,p_{2n+1})$, $q = (q_1,\ldots,q_{2n+1})$ by
\begin{equation}\label{eq:dcc}
 d_{\H_n}(p,q) = \left (\sum_{i=1}^{2n} \left |p_{i}-q_i \right |^4+ \Big |p_{2n+1} -q_{2n+1} + 2 \sum_{j=1}^n \det \left ( \begin{array}{cc} 
 p_{2j-1}-q_{2j-1}& q_{2j-1}\\
 p_{2j}-q_{2j}& q_{2j}
 \end{array} \right ) \Big|^2 \right )^{\frac{1}{4}}.
\end{equation}

A remark in passing: The Heisenberg group is called a group since it really has a Lie group structure. For two elements $(z,t)$ and $(z',t') \in \mathbb{C}^n \times \R \equiv \R^{2n+1}$ the group law of the Heisenberg group is
$$
(z,t)*(z',t')=\left(z+z',t+t'+2\, {\rm Im}\,  \left(\sum_{j=1}^n z_j
  \overline{z_j'}\right)\right).
$$
If we write $z = x+iy$, a basis of left invariant vector fields is given by
\begin{equation}
\label{XY}
X_j=\frac{\partial}{\partial x_j} + 2y_j\frac{\partial}{\partial t},\
Y_j=\frac{\partial}{\partial y_j}-2x_j\frac{\partial}{\partial t},\ j=1,\ldots,n, \
\mbox{and}\
T=\frac{\partial}{\partial t}\, .
\end{equation}
The horizontal space $H_{(z,t)}\H_n$ is then spanned by $X_1,\ldots,X_n,Y_1,\ldots,Y_n$. 

In particular a geodesic (or in fact any Lipschitz curve) cannot move ``straight up'' in the $(2n+1)$th $T$-direction. When you want to find a geodesic between the origin $(0,\ldots,0,0)$ with $(0,\ldots,0,t)$ one needs to circle around the $2n+1$th axis. For the structure of geodesics we refer to \cite{Hajlasz-Zimmerman-2015}.

We will not go more into details of the geometry of the Heisenberg group. The interested reader is referred to \cite{Capogna-Danielli-Pauls-Tyson-2007}.

\subsection{Horizontal maps and Sobolev maps into the Heisenberg group}
An easy computation implies that any map into the Heisenberg group which is Lipschitz with respect to the metric $d_{\H_n}$ satisfies almost everywhere $f^\ast \alpha = 0$.
\begin{lemma}\label{la:lipschitzhorizontal}
Let $f: \Omega \subset \R^d \to \H_n$ be Lipschitz, that is
\[
 d_{\H_n}(f(x),f(y)) \aleq\ |x-y|. 
\]
Then, at any point where $f$ is differentiable, for $\alpha$ the 1-form from \eqref{eq:alpha}
\[
f^\ast \alpha = 0. 
\]
\end{lemma}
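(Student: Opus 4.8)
The plan is to distill the Korányi--Lipschitz hypothesis into a single scalar inequality --- that the vertical increment of $f$, corrected by a horizontal ``area'' term, is quadratic in $|x-y|$ --- and then to recognize that corrected increment as the first-order Taylor coefficient of $f^\ast\alpha$ at a point of differentiability.

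First I would read off two consequences of $d_{\H_n}(f(x),f(y)) \aleq |x-y|$ directly from the formula \eqref{eq:dcc}. Since each horizontal term $|p_i-q_i|^4$ sits under the fourth root, we get $|f_i(x)-f_i(y)| \aleq |x-y|$ for $i=1,\dots,2n$, so the horizontal components are genuinely Euclidean-Lipschitz. Since the bracketed vertical term enters as a square under the fourth root, we also get
\[
 \Big| f_{2n+1}(x)-f_{2n+1}(y) + 2\sum_{j=1}^n \det\begin{pmatrix} f_{2j-1}(x)-f_{2j-1}(y) & f_{2j-1}(y)\\ f_{2j}(x)-f_{2j}(y) & f_{2j}(y)\end{pmatrix}\Big| \aleq |x-y|^2.
\]
Expanding the determinant collapses it to $f_{2j-1}(x)\,f_{2j}(y) - f_{2j-1}(y)\,f_{2j}(x)$, so the quantity in absolute value is the vertical increment plus $2\sum_j\bigl(f_{2j-1}(x)\,f_{2j}(y) - f_{2j-1}(y)\,f_{2j}(x)\bigr)$.

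Now fix a point $x_0$ at which $f$ is differentiable, a direction $v$, and set $x = x_0 + tv$, $y = x_0$, with $a_i := f_i(x_0)$ and $b_i := \partial_v f_i(x_0)$. Writing $f_i(x_0 + tv) = a_i + t b_i + o(t)$ and substituting, the constant-in-$t$ products $a_{2j-1}a_{2j}$ cancel in pairs, and a short computation shows the bracketed quantity equals
\[
 t\Big( b_{2n+1} + 2\sum_{j=1}^n (a_{2j} b_{2j-1} - a_{2j-1} b_{2j}) \Big) + o(t) = t\,(f^\ast\alpha)_{x_0}(v) + o(t),
\]
since the coefficient of $t$ is precisely the pullback $f^\ast\alpha = df_{2n+1} + 2\sum_j (f_{2j}\, df_{2j-1} - f_{2j-1}\, df_{2j})$ evaluated on $v$. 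Comparing this with the $O(t^2)$ bound from the previous step, dividing by $t$, and letting $t \to 0$ forces $(f^\ast\alpha)_{x_0}(v) = 0$; since $v$ is arbitrary, $f^\ast\alpha = 0$ at $x_0$.

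The computation is essentially routine; the only place demanding care is the bilinear bookkeeping of the area term, where one must check both that the products $a_{2j-1}a_{2j}$ cancel and that the surviving linear term reassembles exactly into the antisymmetric combination appearing in $\alpha$. A conceptual point worth recording is that differentiability of the vertical component $f_{2n+1}$ need not be assumed separately: once the horizontal derivatives exist, the scalar estimate above already pins down the directional derivative of $f_{2n+1}$ up to $o(t)$, so the full differentiability of $f$ is really governed by that of its horizontal part.
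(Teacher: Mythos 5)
Your proof is correct and takes essentially the same route as the paper: the paper's entire argument is the single displayed inequality obtained by bounding the vertical term of the Korányi metric by $d_{\H_n}(f(x),f(x+h))^2 \aleq |h|^2$, and your Taylor expansion at a point of differentiability simply makes explicit the limiting step that identifies the corrected vertical increment with $t\,(f^\ast\alpha)(v)+o(t)$, which the paper leaves implicit. (Your closing remark that differentiability of the horizontal components already forces that of $f_{2n+1}$ is a correct bonus observation not present in the paper.)
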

\begin{proof}
The Lipschitz-condition implies in particular
\[
 \left |\frac{f_{2n+1}(x) -f_{2n+1}(x+h)}{|h|} + 2 \sum_{j=1}^n \det \left ( \begin{array}{cc} 
 \frac{f_{2j-1}(x)-f_{2j-1}(x+h)}{|h|}& f_{2j-1}(x+h)\\
 \frac{f_{2j}(x)-f_{2j}(x+h)}{|h|}& f_{2j}(x+h)
 \end{array} \right ) \right |  \aleq |h|.
\]
It is worth noting, and becomes important later, that this computation actualy shows in some sense: if $f \in C^{\frac{1}{2}+\eps}(\Omega,\H_n)$ then $f^\ast(\alpha) = 0$. Cf. Proposition~\ref{pr:approx}.
\end{proof}

Maps $f: \Omega \to \H_n$ which satisfy $f^\ast\alpha = 0$ are called \emph{horizontal maps}. The Sobolev space $W^{1,p}(\Omega,\H_n)$ is defined as 
\[
 W^{1,p}(\Omega,\H_n) = \left \{ f \in W^{1,p}(\Omega,\R^{2n+1}) \quad \mbox{such that }\ f^\ast \alpha = 0 \mbox{ a.e. in $\Omega$}\right \}.
\]
Another definition of $W^{1,p}(\Omega,\H_n)$ is as follows: embed the separable metric space $(\H_n,d_{\H_n})$ in $\ell^\infty$ with the Kuratowski-embedding. Sobolev maps $W^{1,p}$ into the Banach space $\ell^\infty$ are well-defined, and $W^{1,p}(\Omega,\H_n)$ are all those maps $f \in W^{1,p}(\Omega,\ell^\infty)$ that pointwise a.e. belong to $\H_n\subset \ell^\infty$. Both definitions coincide, \cite{Capogna-Lin-2001,Dejarnette-Hajlasz-Lukyanenko-Tyson-2014}.

\subsection{Topology of the Heisenberg group}
When we want to study density questions in Sobolev spaces $W^{1,p}$ (since $\H_n$ is a metric space, we will ask for density of Lipschitz mappings), in view of Bethuel's theorem, Theorem~\ref{th:HWBethuel}, we would like to understand the topology of $\H_n$. But we stumble over the following (simple) fact:
\begin{proposition}
All homotopy classes of $\H_n$ are trivial, i.e. $\pi_k(\H_n) = 0$ for any $k \in \N$.
\end{proposition}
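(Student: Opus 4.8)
The plan is to exploit the fact that, topologically, the Heisenberg group is indistinguishable from Euclidean space. As a \emph{set}, $\H_n$ is simply $\R^{2n+1}$, and the only thing that distinguishes it is the Kor\'anyi metric $d_{\H_n}$. So the first---and really the only substantial---step is to verify that $d_{\H_n}$ induces the standard Euclidean topology on $\R^{2n+1}$; equivalently, that the identity map $\id : (\R^{2n+1}, \abs{\cdot}) \to (\H_n, d_{\H_n})$ is a homeomorphism.

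To check this I would compare the two notions of convergence directly from the formula \eqref{eq:dcc}. One direction is immediate: if $q \to p$ in the Euclidean sense, then every coordinate difference and every determinant entry appearing in \eqref{eq:dcc} tends to zero, so $d_{\H_n}(p,q) \to 0$. The reverse direction is where one has to be slightly careful, and is the part I would flag as the main (if mild) obstacle: from $d_{\H_n}(p,q) \to 0$ the first sum $\sum_{i=1}^{2n}\abs{p_i-q_i}^4$ controls only the first $2n$ coordinates, whereas the last coordinate $p_{2n+1}$ enters \eqref{eq:dcc} solely through the second summand, entangled with the determinant terms. The resolution is that those determinant terms---being products of the now-small first-$2n$ coordinate differences with the bounded coordinates of $q$---are themselves small, so the second summand then forces $\abs{p_{2n+1}-q_{2n+1}} \to 0$. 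This is precisely the earlier phenomenon that a curve cannot move ``straight up'' in the $T$-direction, now read off at the level of the topology.

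With the homeomorphism in hand the conclusion is soft. Since $\R^{2n+1}$ is a vector space it is contractible, so any continuous $f : \S^k \to \H_n$, viewed through the homeomorphism as a map into $\R^{2n+1}$, extends to $\overline{\B^{k+1}}$ by the cone construction $F(x) := \abs{x}\, f(x/\abs{x})$ for $x \neq 0$, $F(0) := 0$. This $F$ is continuous---at the origin because $\abs{F(x)} \leq \abs{x}\,\max_{\S^k}\abs{f} \to 0$---and satisfies $F|_{\partial \B^{k+1}} = f$. Transporting $F$ back through the homeomorphism produces a continuous extension of the original $f$ into $\H_n$, which by the definition of triviality recalled above is exactly the statement that $\pi_k(\H_n) = \{0\}$. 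Since $k$ was arbitrary, all homotopy groups vanish.
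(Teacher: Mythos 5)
Your proposal is correct and follows essentially the same route as the paper: establishing that the identity is a homeomorphism between $(\R^{2n+1},|\cdot|)$ and $(\H_n,d_{\H_n})$ and then concluding from the contractibility of $\R^{2n+1}$. The only cosmetic difference is that you verify the homeomorphism qualitatively via convergence of the terms in \eqref{eq:dcc}, whereas the paper records it quantitatively through the two-sided comparison estimate of Lemma~\ref{la:comparison}.
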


This is actually quite easy to see: every map into $\R^{2n+1}$ which is continuous with respect to the Euclidean metric is also continuous with respect to the $\H_n$-metric. We even have

\begin{lemma}\label{la:comparison}
We can estimate this distance in terms of the usual $\R^{2n+1}$-norm $|\cdot|$,
\[
 \frac{1}{(|p|+|q| + 1)}\, |p-q| \aleq d_{\H}(p,q) \aleq\ (|p|^{\frac{1}{2}}+|q|^{\frac{1}{2}} + 1)\, |p-q|^{\frac{1}{2}}.
\]
In particular, $(\H_n,d_{\H_n})$ is homeomorphic to $(\R^{2n+1},|\cdot|)$.
\end{lemma}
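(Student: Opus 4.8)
The plan is to compare $d_{\H}$ directly with the Euclidean distance by splitting the Kor\'anyi expression \eqref{eq:dcc} into its horizontal and vertical contributions. Write $p=(p',p_{2n+1})$ with $p'=(p_1,\dots,p_{2n})\in\R^{2n}$ the horizontal part, and likewise $q=(q',q_{2n+1})$, and set
\[
 S:=\sum_{i=1}^{2n}\abs{p_i-q_i}^4,\qquad
 W:=p_{2n+1}-q_{2n+1}+2\sum_{j=1}^n\big((p_{2j-1}-q_{2j-1})q_{2j}-q_{2j-1}(p_{2j}-q_{2j})\big),
\]
so that $d_{\H}(p,q)=(S+\abs{W}^2)^{1/4}$. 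Since all norms on $\R^{2n}$ are equivalent and $(a+b)^{1/4}\aeq a^{1/4}+b^{1/4}$ for $a,b\ge 0$, the first reduction is $d_{\H}(p,q)\aeq \abs{p'-q'}+\abs{W}^{1/2}$, where $\abs{p'-q'}$ is the Euclidean distance of the horizontal parts. The only genuinely nonlinear term is the ``area'' sum inside $W$; expanding the determinants and applying Cauchy--Schwarz gives the single estimate I will use repeatedly,
\[
 \Big|\sum_{j=1}^n\big((p_{2j-1}-q_{2j-1})q_{2j}-q_{2j-1}(p_{2j}-q_{2j})\big)\Big|\aleq \abs{p'-q'}\,\abs{q'}\le \abs{p-q}\,\abs{q}.
\]

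For the upper bound I would estimate the two pieces of $d_{\H}\aeq\abs{p'-q'}+\abs{W}^{1/2}$ separately. The horizontal piece is harmless: $\abs{p'-q'}\le\abs{p-q}=\abs{p-q}^{1/2}\abs{p-q}^{1/2}\aleq (\abs{p}^{1/2}+\abs{q}^{1/2})\abs{p-q}^{1/2}$. For the vertical piece, the cross-term estimate together with $\abs{p_{2n+1}-q_{2n+1}}\le\abs{p-q}$ yields $\abs{W}\aleq \abs{p-q}(1+\abs{q})\aleq (\abs{p}^{1/2}+\abs{q}^{1/2}+1)^2\,\abs{p-q}$, and hence $\abs{W}^{1/2}\aleq (\abs{p}^{1/2}+\abs{q}^{1/2}+1)\abs{p-q}^{1/2}$. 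Adding the two pieces gives the claimed upper bound.

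For the lower bound I want to recover $\abs{p-q}$ from $d_{\H}$. The horizontal coordinates are immediate, since $\abs{p'-q'}\aleq d_{\H}(p,q)$. The one step needing care is the vertical coordinate: solving the definition of $W$ for $p_{2n+1}-q_{2n+1}$ and using the cross-term estimate gives
\[
 \abs{p_{2n+1}-q_{2n+1}}\le \abs{W}+2\Big|\sum_j(\cdots)\Big|\aleq d_{\H}(p,q)^2+\abs{q}\,d_{\H}(p,q).
\]
The main obstacle is the quadratic term $d_{\H}(p,q)^2$, which is not yet of the desired linear form. I would absorb it using the upper bound already proved, which (together with $\abs{p-q}\le\abs{p}+\abs{q}$) shows $d_{\H}(p,q)\aleq \abs{p}+\abs{q}+1$; consequently $d_{\H}(p,q)^2\aleq(\abs{p}+\abs{q}+1)\,d_{\H}(p,q)$. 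Combining, $\abs{p_{2n+1}-q_{2n+1}}\aleq(\abs{p}+\abs{q}+1)\,d_{\H}(p,q)$, and with the horizontal estimate this is exactly the lower bound.

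Finally, the homeomorphism assertion follows from the two-sided estimate. Continuity of $\id\colon(\R^{2n+1},\abs{\cdot})\to(\H_n,d_{\H})$ is immediate from the upper bound on bounded sets. For the converse, if $d_{\H}(p_k,p)\to 0$ then the lower bound gives $\abs{p_k-p}\aleq(\abs{p_k}+\abs{p}+1)\,d_{\H}(p_k,p)$; writing $\abs{p_k}\le\abs{p}+\abs{p_k-p}$ and absorbing the resulting $\abs{p_k-p}\,d_{\H}(p_k,p)$ term once $d_{\H}(p_k,p)$ is small, one finds $\abs{p_k-p}\to 0$, so $\id$ and its inverse are both continuous.
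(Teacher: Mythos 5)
Your proof is correct and complete. Note that the paper itself offers no proof of Lemma~\ref{la:comparison} --- being a survey, it states the comparison as a simple known fact --- so there is no in-paper argument to compare against; your direct verification from the Kor\'anyi formula \eqref{eq:dcc} is the standard route. The reduction $d_{\H}(p,q)\aeq |p'-q'|+|W|^{1/2}$, the Cauchy--Schwarz bound $|\sum_j(\cdots)|\aleq |p'-q'|\,|q'|$ on the cross term, and both directions of the estimate all check out. The only genuinely delicate points are the two absorption steps, and you handle both correctly: first, bootstrapping the already-established upper bound into $d_{\H}(p,q)\aleq (|p|^{1/2}+|q|^{1/2}+1)^2 \aleq |p|+|q|+1$ so that the quadratic term $d_{\H}(p,q)^2$ in the vertical estimate becomes linear in $d_{\H}(p,q)$; and second, absorbing the term $|p_k-p|\,d_{\H}(p_k,p)$ once $d_{\H}(p_k,p)$ is small in the homeomorphism argument, which is needed precisely because the lower bound's constant depends on the points themselves. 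One cosmetic remark: your cross-term bound is asymmetric in $p$ and $q$ (only $|q|$ appears), but since the final constants $|p|+|q|+1$ and $|p|^{1/2}+|q|^{1/2}+1$ dominate it, this costs nothing.
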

In particular, any map $f: \Omega \subset \R^d \to \R^{2n+1}$ that is Lipschitz with respect to the $\H_n$-metric is also (locally) Lipschitz with respect to the Euclidean metric. But the converse is false, maps which are Lipschitz with respect to the Euclidean metric might be merely $C^{\frac{1}{2}}$ with respect to the $\H_n$-metric. For example, considered as a map into the Heisenberg group even the identity $\id: \R^{2n+1} \to \R^{2n+1}$ is only $C^{\frac{1}{2}}_{loc}(\R^{2n+1},\H_n)$. 

Actually, one can show that the Hausdorff dimension of any open set in $\H_n$ equals $2n+2$. Thus we have the disturbing situation that $(\H_n,d_{\H_n})$ is homeomorphic to $\R^{2n+1}$ (actually even $C^{\frac{1}{2}}$-homeomorphic), but not Lipschitz-homeomorphic, not even locally. 

\section{H\"older-Topology and density results on the Heisenberg group}
So in order to understand questions of density in Sobolev spaces, we need to find suitable nontrivial topological quantities on the Heisenberg group.
\begin{definition}[$C^\gamma$-homotopy]
For $\gamma \in (0,1]$ the $k$-th $C^\gamma$-homotopy group $\pi_k^\gamma(X)$ of a metric space $X$ is defined as the class of maps $f \in C^\gamma(\S^k,\H_n)$, where two maps $f, g \in C^\gamma(\S^{k},\H_n)$ are identified, if there exists a $C^\gamma$ homotopy $H \in C^\gamma([0,1] \times \S^{k},\H_n)$ so that $H(0,\cdot) = f$ and $H(1,\cdot) = g$.

We write $\pi^{\lip}_k(X)$ for $\pi^{1}_k(X)$.
\end{definition}
Here is what is known on homotopy groups:
\begin{theorem}[Homotopy groups]${}$
\begin{enumerate}
 \item $\pi^{\lip}_m(\H_n)= \{0\}$ for all $1\leq m<n$.
\item $\pi_m^{\lip}(\H_1)=\{ 0\}$ for all $m\geq 2$.
 \item $\pi^{\rm \gamma}_n(\H_n)\neq \{0\}$ when $\frac{n+1}{n+2} < \gamma \leq 1$.
 \item $\pi_{4n-1}^\gamma(\H_{2n}) \neq \{0\}$ when $\frac{4n+1}{4n+2} < \gamma \leq 1$.
\end{enumerate}
 \end{theorem}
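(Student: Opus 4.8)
The four assertions fall into two groups that call for opposite strategies. Parts (1) and (2) are \emph{flexibility} statements: they say that low-dimensional Lipschitz spheres can be contracted, and the natural tool is an $h$-principle for horizontal (isotropic) maps. Parts (3) and (4) are \emph{rigidity} statements: they say that certain spheres \emph{cannot} be contracted through $C^\gamma$ maps, and here one must manufacture an analytic invariant out of the contact form $\alpha$ of \eqref{eq:alpha} that survives for $\gamma$ above an explicit threshold but may die below it. The common engine behind both rigidity results is a \emph{fractional Stokes theorem}: for a $C^\gamma$ map the distributional pullback of a smooth degree-$k$ form is a well-defined current obeying Stokes' theorem precisely when $\gamma>\tfrac{k}{k+1}$. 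This is the higher-order analogue of the phenomenon already visible in Lemma~\ref{la:lipschitzhorizontal}, where $f^\ast\alpha=0$ is forced as soon as $f\in C^{\frac12+\eps}$, the exponent $\tfrac12=\tfrac{1}{1+1}$ being exactly the threshold for the degree-$1$ form $\alpha$.

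For (1) and (2) I would run Gromov's $h$-principle in the Lipschitz category. Since $\H_n$ is homeomorphic to $\R^{2n+1}$ (Lemma~\ref{la:comparison}), every continuous sphere is contractible; the only issue is to perform the contraction \emph{through Lipschitz, hence horizontal, maps}. When $m<n$ a sphere $\S^m$ sits strictly below the Lagrangian dimension of the symplectic form $d\alpha$, so a generic horizontal map is isotropic with room to spare and any two such maps are horizontally homotopic; combined with topological triviality this gives $\pi^{\lip}_m(\H_n)=\{0\}$. For $\H_1$ and $m\geq 2$ one instead uses that a Lipschitz image of $\S^m$ is too thin to carry any obstruction inside the three-dimensional contact structure, and a Sard-type argument pushes it off and contracts it. The main obstacle here is technical rather than conceptual: keeping the homotopy simultaneously horizontal and Lipschitz, since naively smoothing a horizontal map destroys horizontality.

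For (3) I would first write down an explicit Lipschitz map $\varphi:\S^n\to\H_n$ built from the symplectic structure, topologically trivial but carrying nonzero \emph{symplectic area}. The obstruction is the current-theoretic integral, over any filling, of the closed degree-$(n+1)$ form $(d\alpha)^{(n+1)/2}$ (or $\alpha\wedge(d\alpha)^{n/2}$ when $n$ is even): by the fractional Stokes theorem this integral depends only on the boundary values and is a $C^\gamma$-homotopy invariant as soon as $\gamma>\tfrac{n+1}{n+2}$, the threshold for a degree-$(n+1)$ form. Showing that it is nonzero on $\varphi$ yet would have to vanish on any $C^\gamma$ null-homotopy produces the contradiction, hence $\pi^\gamma_n(\H_n)\neq\{0\}$. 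The crux is the estimate that legitimizes the pullback and Stokes' theorem \emph{exactly} at $\tfrac{n+1}{n+2}$, a paraproduct / fractional-integration-by-parts bound on an $(n+1)$-fold product of first derivatives.

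Part (4) is the heart of the matter and the place where the linking number enters. Here I would start from a map $g:\S^{4n-1}\to\S^{2n}$ of nonzero Hopf invariant and transplant it into $\H_{2n}$ so that the Hopf invariant reappears as the $\H_{2n}$-linking number of the preimages of two regular values: two $(2n-1)$-cycles in the $(4n+1)$-dimensional $\H_{2n}$ do link, since $(2n-1)+(2n-1)+1=4n-1=\dim\H_{2n}-2$. This linking number is computed by the current-theoretic integral of the contact \emph{volume} form $\alpha\wedge(d\alpha)^{2n}$, of degree $4n+1$, so the fractional Stokes theorem turns it into a genuine $C^\gamma$-homotopy invariant exactly when $\gamma>\tfrac{4n+1}{4n+2}$; a nonzero Hopf invariant then forces $\pi^\gamma_{4n-1}(\H_{2n})\neq\{0\}$ in that range. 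I expect the decisive difficulty to be precisely the transplantation step together with the $C^\gamma$-invariance of the linking integral at the critical exponent: one must show that the vertical coordinate supplies the primitive the Hopf construction requires, and that the resulting top-degree pullback remains a well-defined current whose boundary behaves correctly under homotopy. This is exactly the regularity at which Hölder maps can still \emph{see} the contact volume, and controlling it is the analytic core of the Haj\l{}asz--Mirra--Schikorra argument.
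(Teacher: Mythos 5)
Your architecture for the rigidity parts (smooth an obstruction integral, show it is stable above a H\"older threshold, nonzero on a candidate, zero through a null-homotopy) matches the paper's, but the invariant you propose for (3) cannot work, and this is a genuine gap rather than a detail. The filling integral of the exact form $(d\alpha)^{(n+1)/2}$ equals, by Stokes, the boundary integral $\int_{\S^n}\varphi^\ast\bigl(\alpha\wedge(d\alpha)^{(n-1)/2}\bigr)$, and this vanishes \emph{identically} on every Lipschitz $\varphi:\S^n\to\H_n$, because such maps satisfy $\varphi^\ast\alpha=0$ a.e.\ (Lemma~\ref{la:lipschitzhorizontal}): the very horizontality that you invoke to kill fillings also kills your boundary invariant. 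Concretely, for $n=1$ a closed horizontal loop has \emph{zero} symplectic area, since $\oint\varphi^\ast(y\,dx-x\,dy)=-\tfrac12\oint d(\varphi_{3})=0$; there is no horizontal sphere ``carrying nonzero symplectic area''. (Moreover $(d\alpha)^{(n+1)/2}$ only exists for odd $n$, and for even $n$ the form $\alpha\wedge(d\alpha)^{n/2}$ is not closed, so its filling integral is not boundary-determined.) No pullback of a fixed globally exact form can detect nontriviality here; the paper's way out is \emph{extrinsic}: it takes the bi-Lipschitz embedded sphere of Theorem~\ref{th:Snembedding} and pairs it with a linking form $\omega$ that is closed only \emph{near the image} (Proposition~\ref{pr:linking}, built by a Mayer--Vietoris induction). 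Then $\int_{\S^n}\varphi^\ast\omega\neq 0$ because $\varphi$ is an embedding --- this is the source of nonvanishing your proposal lacks --- while a $C^\gamma$ null-homotopy forces the limit of the smoothed integrals to vanish via the Lefschetz decomposition $d\omega=\alpha\wedge\beta+d\alpha\wedge\sigma$ of Lemma~\ref{la:rank} together with the horizontal gain \eqref{eq:firstorderest}. Even the Balogh--F\"assler argument you are implicitly recalling is of this linked type: it uses the winding of the projected figure-eight around chosen points (linking with vertical lines), not the total symplectic area, which is zero.

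In (4) your dimension count is wrong in a way that derails the mechanism: the preimages $\varphi^{-1}(p),\varphi^{-1}(q)$ are $(2n-1)$-cycles that link in the \emph{domain} $\S^{4n-1}$, where $(2n-1)+(2n-1)+1=4n-1$ is exactly the ambient dimension; in the $(4n+1)$-dimensional $\H_{2n}$ two $(2n-1)$-cycles do not satisfy the linking condition $a+b+1=\dim$, and the contact volume form $\alpha\wedge(d\alpha)^{2n}$ plays no role in the paper's proof. The paper instead composes the Hopf map with the embedding $\varphi_2$ of Theorem~\ref{th:Snembedding}, Lipschitz-extends $\varphi_2^{-1}$, and destroys a hypothetical null-homotopy by applying the rank condition, Lemma~\ref{la:rank}, to the $(2n+1)$-form $(\varphi_2^{-1})^\ast(d\eta)$ (to produce the primitive $\omega_{\Phi_1}$) and then to a $4n$-form after Stokes, where $\eta$ is the volume form of $\S^{2n}$. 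You also conflate two distinct thresholds: convergence and approximation-independence of the smoothed integrals is the Brezis--Nguyen trace estimate and costs only $\gamma>\frac{k}{k+1}$ (Lemma~\ref{la:lconverges}), whereas the exponents $\frac{n+1}{n+2}$ and $\frac{4n+1}{4n+2}$ in the theorem come from the \emph{vanishing} step, i.e.\ the balance $\eps^{2\gamma-1}\cdot\eps^{k(\gamma-1)}\to 0$ of the horizontal gain \eqref{eq:firstorderest} against the generic loss \eqref{eq:approxgeneral} in Propositions~\ref{pr:approx} and~\ref{pr:hoeldermapsintoHn}; your single ``fractional Stokes'' rule reproduces the right fractions by accident but mislocates their origin, and without the special estimate on $\varphi_\eps^\ast\alpha$ the vanishing half of the argument fails. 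Finally, for (1) and (2) the paper offers no proof (they are quoted from Dejarnette--Haj\l{}asz--Lukyanenko--Tyson and Wenger--Young); your $h$-principle and Sard-type sketches leave open precisely the obstacle you yourself name --- keeping a homotopy simultaneously Lipschitz and horizontal, with no nearest-horizontal projection available --- and the actual proofs (horizontal cone-type constructions giving Lipschitz $n$-connectivity for (1), factorization through metric trees for (2)) are of a different nature, so these parts remain unproven in your proposal.
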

(1) was proven in \cite{Dejarnette-Hajlasz-Lukyanenko-Tyson-2014,WengerY1}. (2) is due to \cite{WengerY2}. For (3) there are several proofs in the Lipschitz case \cite{Balogh-Faessler-2009,Dejarnette-Hajlasz-Lukyanenko-Tyson-2014,Hajlasz-Schikorra-Tyson-2014}. For Lipschitz homotopy groups, (4) was the main result of \cite{Hajlasz-Schikorra-Tyson-2014}. The H\"older-groups are from a forthcoming paper \cite{Hajlasz-Mirra-Schikorra-2016}. In a more recent paper, \cite{H18} Haj\l{}asz proved that for $n \geq 2$, $\pi^{n+1}_{\lip}(\H_{n}) \neq \{0\}$.

It may seem natural to hope that the counterpart of Theorem~\ref{th:HWBethuel} holds for Lipschitz Homotopy $\pi_k^{\lip}(\H_n)$. For example, $\pi_d^\lip(\H_n) = 0$ if and only if Lipschitz maps are dense in $W^{1,p}(\B^{d+1},\H_n)$ for $d < p < d+1$. However, we do not know this: we cannot just run the algorithm for Bethuel's Theorem~\ref{th:HWBethuel} described above. $W^{1,p}(\S^d)$ on a $d$-dimensional manifold embeds merely into $C^{1-\frac{d}{p}}$, and $1-\frac{d}{p} < \frac{1}{d+1} \leq \frac{1}{2}$, so convergence in $W^{1,p}(\S^d,\H_n)$ means nothing in terms of convergence in Homotopy groups.

The counterpart of Lemma~\ref{la:continuouslipschitz} is unknown in the Heisenberg group (and it actually false for $C^{\gamma}$, $\gamma < \frac{1}{2}$). Related to this we do not know (although it seems quite likely at least for $\gamma \approx 1$ whether $\pi_k^\gamma (\H_n) = \pi_k^\lip(\H_n)$ for any $k$. The technical issue with this is that in contrast with Riemannian manifolds here we do not have a projection $\Pi$ that could map non-horizontal lines which are uniformly close to a horizontal line into the ``nearest horizontal line'', so we cannot (i.e. do not understand how to) approximate even H\"older maps with Lipschitz maps.

Even though it is not an immediate consequence of the non-triviality of the corresponding Lipschitz-Homotopy groups, we the following non-density results are known:
\begin{theorem} Let $\mathcal{M}$ be a smooth compact Riemannian manifold possibly with boundary.
\begin{enumerate}
\item If $\dim\M \leq n$ then the Lipschitz maps $\lip(\M,\H_n)$ are dense in $W^{1,p}(\M,\H_n)$, for any $1 < p <\infty$.
\item If $\dim\M\geq n+1$ and $n\leq p<n+1$, then
Lipschitz maps $\lip(\M,\H_n)$ are not dense in
$W^{1,p}(\M,\H_n)$. 
\item If $\M$ is a compact Riemannian manifold with or without boundary of
dimension $\dim\M\geq 4n$, then Lipschitz mappings
$\lip(\M,\H_{2n})$ are not dense in $W^{1,p}(\M,\H_{2n})$ when
$4n-1\leq p<4n$.
\end{enumerate}
\end{theorem}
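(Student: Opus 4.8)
The statement splits into one positive assertion, (1), and two non-density assertions, (2) and (3), and I would treat them by quite different mechanisms. The two negative statements are exactly where the nontrivial homotopy information enters: (2) rests on $\pi_n^\gamma(\H_n)\neq\{0\}$ for $\gamma$ close to $1$, and (3) on $\pi_{4n-1}^\gamma(\H_{2n})\neq\{0\}$, both of which I may assume. The governing difficulty throughout is the one flagged above: a $W^{1,p}$-limit on a $k$-dimensional sphere only controls the $C^{1-k/p}$-norm, and $1-k/p$ sits far below $\tfrac12$, so I cannot certify that homotopy classes are preserved by uniform closeness. Every argument below is therefore designed to replace ``uniform convergence of the maps'' by ``convergence of a single real-valued invariant that is stable in $W^{1,p}$.''

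\emph{Positive part (1).} Here $\dim\M=d\le n$, so any homotopy obstruction that could appear has dimension $\le d-1\le n-1$, and $\pi_m^\lip(\H_n)=\{0\}$ for all $1\le m<n$. I would work in bi-Lipschitz charts, reduce to a cube, and run the usual Bethuel/Hang--Lin cell-by-cell scheme: choose a generic grid so that, by a Fubini argument, $u$ restricts to a good $W^{1,p}$ map on the skeleton; approximate on the skeleton; and then extend inward one cell at a time. Because every cell has dimension $\le n$ and the relevant Lipschitz homotopy groups vanish, no cell carries an obstruction. The genuine difference from the Riemannian case is that there is no nearest-point projection onto the horizontal constraint $f^\ast\alpha=0$ (Lemma~\ref{la:lipschitzhorizontal}), so both the skeleton-approximation and the extension must be carried out by hand, respecting horizontality: one approximates the $\R^{2n}$-component and recovers the vertical component by integrating the contact relation $\alpha=0$ along paths. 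This is possible precisely because the domain dimension $d\le n$ fits inside the $2n$-dimensional horizontal distribution, leaving room to keep the reconstructed map horizontal.

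\emph{Negative parts (2) and (3).} The template is common to both. Let $(k,\H_m)=(n,\H_n)$ in case (2) and $(k,\H_m)=(4n-1,\H_{2n})$ in case (3). Pick a Lipschitz map $\varphi\colon \S^{k}\to\H_m$ representing a nontrivial class in the corresponding homotopy group, embed a $(k+1)$-ball in $\M$ (possible since $\dim\M\ge k+1$), and set $u(x):=\varphi(x/|x|)$ on that ball, extended arbitrarily elsewhere. Since $|Du|\sim|x|^{-1}$, one has $u\in W^{1,p}$ exactly when $|x|^{-1}\in L^p(\B^{k+1})$, i.e. $p<k+1$; this yields the upper bounds $p<n+1$ and $p<4n$. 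I would then attach to $u$ a real-valued invariant $I$ computed on concentric $k$-spheres: a symplectic/cohomological pairing built from $d\alpha$ (whose pullback vanishes on horizontal maps) in case (2), and the linking number in case (3). By construction $I(\varphi)\neq 0$, whereas any Lipschitz $g$ extends Lipschitz-continuously across each sphere and so is Lipschitz-nullhomotopic there, forcing $I$ to vanish on its restriction; a Fubini argument supplies a radius along which $g\big|_{r\S^{k}}\to u\big|_{r\S^{k}}=\varphi$ in $W^{1,p}(r\S^{k})$.

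The main obstacle is the stability of $I$ under this $W^{1,p}$-convergence. Being (in case (2)) the integral over $\S^{k}$ of a pulled-back $k$-form, $I$ is of degree $k$ in the gradient, hence of class $L^{p/k}$; integrability together with the weak continuity of such Jacobian-type null-Lagrangians holds exactly at and above the critical exponent $p=k$, which forces the lower bounds $p\ge n$ in (2) and $p\ge 4n-1$ in (3). Granting this continuity, $0=I(g\big|_{r\S^{k}})\to I(\varphi)\neq 0$ is a contradiction, so $u$ is not a $W^{1,p}$-limit of Lipschitz maps. For (3) this continuity is the delicate point: the linking number is a secondary, Hopf-type invariant defined through a primitive of $u^\ast d\alpha$, and proving that it is well defined and continuous in $W^{1,p}(\S^{4n-1})$ for $p\ge 4n-1$ is the real work, and the place where the analysis of the linking number alluded to above becomes indispensable.
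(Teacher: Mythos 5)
Your overall architecture for the negative parts (2) and (3) matches the paper's scheme exactly: the cone map $u(x)=\varphi(x/|x|)$ (giving $u\in W^{1,p}$ iff $p<k+1$), a Fubini selection of a good radius, a single real-valued invariant $I$ that is stable under strong $W^{1,p}$-convergence on the $k$-sphere for $p\geq k$, and vanishing of $I$ on restrictions of ball-Lipschitz maps via Stokes' theorem and the rank condition, Lemma~\ref{la:rank}. The genuine gap is in how you source the nontriviality $I(\varphi)\neq 0$. You propose to \emph{assume} $\pi_n^\gamma(\H_n)\neq\{0\}$, resp.\ $\pi_{4n-1}^\gamma(\H_{2n})\neq\{0\}$, and then assert $I(\varphi)\neq 0$ ``by construction''; but the paper explicitly warns that non-density is \emph{not} a formal consequence of nontrivial Lipschitz (or H\"older) homotopy groups, for exactly the reason you yourself identify: $W^{1,p}$-closeness on a $k$-sphere controls nothing homotopical. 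Abstract nontriviality of a homotopy class does not hand you a numerical, $W^{1,p}$-stable detector. What actually produces the nonzero invariant is concrete and cannot be assumed away: in (2) the representative $\varphi$ is the \emph{bi-Lipschitz embedding} $\S^n\to\H_n$ of Theorem~\ref{th:Snembedding}, and the smooth $n$-form $\omega$ with $\int_{\S^n}\varphi^\ast(\omega)\neq 0$ comes from the linking-number construction, Proposition~\ref{pr:linking}, whose Mayer--Vietoris/Alexander-duality proof needs injectivity of $\varphi$; in (3) it is Hopf's map $\varphi_1$ with $\HI(\varphi_1)\neq 0$ (Theorem~\ref{th:hopf}) composed with the bi-Lipschitz embedding $\varphi_2:\S^{2n}\to\H_{2n}$, together with a Lipschitz extension of $\varphi_2^{-1}$. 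None of these ingredients appears in your proposal, and without them the contradiction $0=\lim I(g|_{r\S^k})=I(\varphi)\neq 0$ has no starting point.

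Relatedly, your description of the invariants is internally inconsistent. In (2) an invariant ``built from $d\alpha$'' vanishes identically on horizontal maps --- in particular on $\varphi$ itself, which is Lipschitz into $\H_n$ and hence satisfies $\varphi^\ast(d\alpha)=0$ by Lemma~\ref{la:lipschitzhorizontal} --- so it cannot detect anything. In the paper, $\alpha$ and $d\alpha$ enter only through the Lefschetz decomposition $\kappa=\alpha\wedge\beta+d\alpha\wedge\sigma$ of Lemma~\ref{la:rank}, which is the \emph{vanishing} mechanism (killing $\int_{\B^{n+1}}\Phi_k^\ast(d\omega)$ for Lipschitz competitors); the invariant itself, $\int_{\S^n}\psi^\ast(\omega)$, is purely topological. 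You have similarly garbled (3): the secondary, Hopf-type invariant is built from a primitive of $\Phi_1^\ast(\eta)$, with $\eta$ the volume form of $\S^{2n}$ pulled through $\varphi_2^{-1}$ --- not from a primitive of $u^\ast(d\alpha)$, which again vanishes; closedness of $\Phi_1^\ast(\eta)$ is what the rank condition supplies. Finally, in (1) (which the paper only cites, so you are on your own there) your lifting step fails as stated: mollifying the first $2n$ components destroys the integrability condition $\sum_j dg_{2j}\wedge dg_{2j-1}=0$, which is necessary --- and not automatic even for domain dimension $d\leq n$ --- for a vertical primitive solving $dt=-2\sum_j\brac{g_{2j}\,dg_{2j-1}-g_{2j-1}\,dg_{2j}}$ to exist, so the vertical component cannot simply be ``recovered by integrating $\alpha=0$''; this is the same absence-of-projection difficulty the paper emphasizes, and it requires a genuinely different construction than the one you sketch.
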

(1) is due to \cite{Dejarnette-Hajlasz-Lukyanenko-Tyson-2014}, see also \cite{Hajlasz-Schikorra-2014}.
(2) is due to \cite{Dejarnette-Hajlasz-Lukyanenko-Tyson-2014}, (3) is from \cite{Hajlasz-Schikorra-Tyson-2014}.

Let us remark another interesting topological fact of the Heisenberg group, even though it is not (necessarily) related to density questions.
\begin{theorem}\label{th:gromov}
Let $k \geq n+1$, $\gamma \in (\frac{1}{2},1]$, $\theta > 0$ and
\begin{equation}\label{eq:gammatheta}
 2\gamma + \theta (k-1) -k > 0.
\end{equation}
Then there is no injective $f: \Omega \subset \R^k \to \H_n$ which is $C^\gamma$ with respect to the $\H_n$-metric and $C^\theta$ with respect to the Euclidean metric.
\end{theorem}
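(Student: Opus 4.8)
The plan is to combine the two structural facts already in play: that a $C^\gamma$ map with $\gamma>\tfrac12$ is \emph{horizontal}, and that horizontality is a contact/symplectic isotropy condition which, in the smooth category, already rules out immersions of an $(n+1)$-dimensional domain. The role of the H\"older hypotheses, and of the arithmetic condition \eqref{eq:gammatheta}, is to upgrade this smooth obstruction to a \emph{topological} one — a linking number — that survives for merely H\"older maps.

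\emph{Step 1: reduction to $k=n+1$.} We may assume $\theta\le 1$, since a $C^\theta$ map with $\theta>1$ on a connected set is constant, hence not injective, and the claim is trivial. Then the left-hand side of \eqref{eq:gammatheta} is non-increasing in $k$ (its $k$-derivative is $\theta-1\le 0$), so passing to the intersection of $\Omega$ with a suitable affine $(n+1)$-plane through an interior point preserves injectivity, both H\"older conditions, and the inequality. It therefore suffices to rule out an injective map $f:\Omega\subset\R^{n+1}\to\H_n$ that is $C^\gamma$ in $d_{\H_n}$ and $C^\theta$ in the Euclidean metric.

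\emph{Step 2: horizontality and the smooth model.} By the remark following Lemma~\ref{la:lipschitzhorizontal} (cf. Proposition~\ref{pr:approx}), the hypothesis $\gamma>\tfrac12$ forces $f^\ast\alpha=0$ in the weak sense, and hence $f^\ast\omega=0$ for the symplectic form $\omega:=-\tfrac14 d\alpha=\sum_{j=1}^n dp_{2j-1}\wedge dp_{2j}$ associated with \eqref{eq:alpha}. Were $f$ smooth, $f^\ast\alpha=0$ would confine every $df(T_x\Omega)$ to the contact plane $\ker\alpha$, on which $\omega$ is non-degenerate; being isotropic, $df(T_x\Omega)$ would have dimension at most $n$, contradicting that an injective immersion of an $(n+1)$-manifold has differential of rank $n+1$. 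The substance of the theorem is that this rank obstruction persists at H\"older regularity, where it must be read off not from $df$ but from a linking number.

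\emph{Step 3: the linking number.} First I would fix a small ball $B\subset\Omega$, so that injectivity makes $\Sigma:=f(\partial B)$ a topologically embedded $n$-sphere bounding the embedded $(n+1)$-disk $f(B)\subset\R^{2n+1}$. Horizontality guarantees that the purely non-horizontal direction $e_{2n+1}$ (the $T$-direction) is transverse to every contact plane, so it defines a genuine normal framing of $\Sigma$; I then study the self-linking $L:=\mathrm{lk}(\Sigma,\Sigma_s)$ of $\Sigma$ with its vertical push-off $\Sigma_s:=\Sigma+s\,e_{2n+1}$. On one side, the embedded spanning disk $f(B)$ pins $L$ down as the intersection number $f(B)\cdot\Sigma_s$, a topological quantity. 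On the other side, writing $L$ as a period of the contact form $\alpha$ — the canonical primitive with $d\alpha=-4\omega$ — and using $f^\ast\alpha=0$ evaluates this period through the vertical framing alone, producing a definite value that the topological computation cannot match. The condition \eqref{eq:gammatheta} (with $k=n+1$) enters exactly here: via \eqref{eq:dcc} the Euclidean modulus controls horizontal increments at rate $\theta$ and the corrected vertical increment at rate $2\gamma$, and $2\gamma+\theta(k-1)-k>0$ is precisely the H\"older budget on the $n$-cycle $\partial B$ making the linking integral absolutely convergent and the attendant Stokes/approximation errors vanish as $s\to0$, so that both evaluations are legitimate and must agree — the contradiction.

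\emph{Main obstacle.} The hard part will be Step 3 at low regularity: for a merely H\"older, non-transverse map there is a priori no well-defined intersection or linking number, and the Stokes-type identity equating the topological and the $\alpha$-period computations fails classically. I expect to need a Young / Sobolev--H\"older product estimate to give meaning to the pulled-back forms and to bound the commutator errors, with \eqref{eq:gammatheta} as the sharp threshold; and, as the genuine heart of the matter, a separate argument showing that the horizontal value of $L$ is truly non-vanishing rather than merely well-defined. The comparison Lemma~\ref{la:comparison} is used throughout to translate between the two metrics.
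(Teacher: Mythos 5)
Your Steps 1 and 2 are sound and agree with the intended argument: the slicing reduction to $k=n+1$ works because for $\theta\le 1$ the left-hand side of \eqref{eq:gammatheta} is non-increasing in $k$, and you have correctly located the arithmetic source of \eqref{eq:gammatheta} --- after mollifying at scale $\eps$, one factor (the contact direction) improves to $\eps^{2\gamma-1}$, which is estimate \eqref{eq:firstorderest} of Proposition~\ref{pr:approx}, while the remaining $k-1$ factors cost $\eps^{\theta-1}$ each from the Euclidean H\"older bound, so that after Stokes and the Lefschetz decomposition of Lemma~\ref{la:rank} the ball integral is $O(\eps^{2\gamma+\theta(k-1)-k})$, exactly as in Proposition~\ref{pr:hoeldermapsintoHn}. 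One correction of emphasis, though: \eqref{eq:gammatheta} is the threshold that makes the pulled-back period \emph{vanish} in the limit (the Heisenberg side of the contradiction); it is not what makes the linking integral converge. Indeed the theorem permits $\theta\le\frac{n}{n+1}$ (take $\gamma$ near $1$, $\theta$ slightly above $\frac{n-1}{n}$), so Lemma~\ref{la:lconverges} is unavailable on the topological side; there the limit converges for a different reason, namely that the dual form is closed in a neighborhood of the image, making the period stable under approximation.

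The genuine gap is Step 3. The proof needs a quantity that is nonzero \emph{because $f$ is injective} and zero \emph{because $f$ is Heisenberg--H\"older}, with the two facts established independently; your self-linking $L$ of $\Sigma=f(\partial B)$ with its vertical push-off $\Sigma_s$ entangles them. The assertion that the $\alpha$-period evaluation ``produces a definite value that the topological computation cannot match'' is precisely the claim to be proven, and you explicitly leave it open. Moreover, even in the smooth category your formulation amounts to a higher-dimensional Bennequin-type statement (nonvanishing of the vertical-framing self-linking of a horizontal sphere bounding an embedded disk) --- a deep contact-topological fact already in dimension $3$, and one with no available analogue here; and for merely H\"older maps the intersection number $f(B)\cdot\Sigma_s$ has no classical meaning, since there is no transversality. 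The paper decouples the two sides instead: the nonvanishing side is Proposition~\ref{pr:linkinghoelder}, proven by a Mayer--Vietoris induction over hemispheres (Alexander duality) which, using \emph{only} that $\varphi=f\big|_{\S^n}$ is a topological embedding and no horizontality at all, produces a smooth $n$-form $\omega$ with $d\omega=0$ near $\varphi(\S^n)$ and $\int_{\S^n}\varphi_\eps^\ast(\omega)$ eventually constant and nonzero; the vanishing side is your Step-3 error budget, i.e. Propositions~\ref{pr:approx} and \ref{pr:hoeldermapsintoHn} applied to the same $\omega$ on the ball. Substituting that decoupled pair for your Step 3 turns your outline into the paper's proof; as written, the nonvanishing --- which you yourself flag as the ``genuine heart of the matter'' --- is missing, so the argument is incomplete.
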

Theorem~\ref{th:gromov} is from the forthcoming paper \cite{Hajlasz-Mirra-Schikorra-2016}. For $\gamma = \theta > \frac{k}{k+1}$ it was proven by Gromov, \cite{GromovCarnotCaratheodory}, see also Pansu's \cite{Pansu-2016},  using microflexibility arguments. We also refer to the work by LeDonne and Z\"ust \cite{LeDonne-Zust-2012}. For $\theta = 1$ this can be found in \cite{BaloghHajlaszWildrick}. Theorem~\ref{th:gromov} is so far among the closest results we have to proving a conjecture by Gromov, see also the recent 
\cite{WengerYoung18} and references within.

\begin{conjecture}[Gromov]
There is no embedding $f \in C^{\gamma}(\Omega,\H_n)$ whenever $\Omega$ is an open subset of $\R^k$, $k \geq n+1$, and $\gamma > \frac{1}{2}$.
\end{conjecture}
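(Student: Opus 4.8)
The plan is to take Theorem~\ref{th:gromov} as the engine and to push its linking-number estimate down to the endpoint exponent. The first point is that although the conjecture imposes no Euclidean regularity, some is automatic: the lower bound in Lemma~\ref{la:comparison} shows that any $f\in C^\gamma(\Omega,\H_n)$ is locally $C^\gamma$ with respect to the Euclidean metric, so one may always take $\theta=\gamma$. Substituting $\theta=\gamma$ into the admissibility condition \eqref{eq:gammatheta} yields $\gamma(k+1)-k>0$, that is $\gamma>\tfrac{k}{k+1}$, which is exactly Gromov's original range. Hence the genuine content of the conjecture is the strip $\tfrac12<\gamma\le\tfrac{k}{k+1}$, and the task reduces to running the argument of Theorem~\ref{th:gromov} with the minimal regularity $\theta=\gamma$ while assuming only $\gamma>\tfrac12$.

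The structural feature that singles out $\gamma>\tfrac12$ is horizontality: by the remark following Lemma~\ref{la:lipschitzhorizontal}, $f^\ast\alpha=0$ already for $f\in C^{1/2+\eps}$, and therefore $f^\ast(d\alpha)=0$, where $d\alpha$ is, up to a constant, the standard symplectic form on the horizontal bundle. On this I would build a linking obstruction: attach to the embedding an integer-valued linking invariant in the target $\H_n\cong\R^{2n+1}$ --- for instance the linking number of the images of two disjoint test cycles of complementary dimension, or a self-linking invariant of the image of a single sphere --- and evaluate it in two ways. On the one hand, injectivity together with a reference (model) configuration should force the invariant to be a fixed nonzero integer. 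On the other hand, I would represent it by a Gauss-type linking integral whose kernel is built from the Kor\'anyi gauge; here horizontality is decisive, since $f^\ast\alpha=0$ annihilates the vertical part of the integrand and leaves a quantity controlled purely by the horizontal H\"older oscillation of $f$. Estimating this integral by the H\"older seminorms and rescaling then forces the invariant to be arbitrarily small, hence $0$, contradicting its nonzero value. This is precisely the scheme behind Theorem~\ref{th:gromov}, in which the convergence and smallness of the integral are guaranteed exactly by \eqref{eq:gammatheta}.

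The main obstacle is the borderline behaviour of the linking integral as $\gamma\downarrow\tfrac12$. The relevant Gauss kernel on $\R^{2n+1}$ is homogeneous of degree $-2n$, and with only $\theta=\gamma$ at one's disposal the integrand is integrable, and negligible after rescaling, exactly in the regime \eqref{eq:gammatheta}, i.e. down to $\gamma>\tfrac{k}{k+1}$ and no further; throughout the strip $\tfrac12<\gamma\le\tfrac{k}{k+1}$ the naive estimate diverges and one can no longer conclude that the invariant vanishes. Overcoming this seems to require replacing the single-scale Gauss integral by a genuinely multi-scale, renormalized linking pairing adapted to the parabolic dilations of $\H_n$, one that exploits the vanishing of $f^\ast(d\alpha)$ at every scale rather than through a single integral. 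Whatever form it takes, such an argument must break down \emph{exactly} at $\gamma=\tfrac12$: the identity $\id\colon\R^{2n+1}\to\H_n$ is injective and lies in $C^{1/2}_{\loc}(\R^{2n+1},\H_n)$, so $\tfrac12$ is the true threshold and no estimate can survive past it. I expect this sharp-threshold requirement, coupled with the loss of horizontality precisely at $\gamma=\tfrac12$, to be where the real difficulty lies.
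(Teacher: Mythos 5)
Your proposal cannot be assessed against ``the paper's proof'' because none exists: the statement is an open conjecture, the abstract says explicitly ``We do not prove or disprove the Gromov Conjecture,'' and Theorem~\ref{th:gromov} is offered only as ``among the closest results we have'' to it. Your first two paragraphs are a correct reduction, not a proof: by the lower bound in Lemma~\ref{la:comparison} any $f\in C^\gamma(\Omega,\H_n)$ is indeed locally $C^\gamma$ in the Euclidean metric, so taking $\theta=\gamma$ in \eqref{eq:gammatheta} shows the known machinery settles exactly the range $\gamma>\frac{k}{k+1}$ and leaves the strip $\frac12<\gamma\le\frac{k}{k+1}$ untouched. But the third paragraph, where the actual proof would have to live, contains no argument: the ``genuinely multi-scale, renormalized linking pairing adapted to the parabolic dilations'' is never constructed, no estimate is proved for it, and no mechanism is given for why it should vanish on embeddings with $\gamma>\frac12$. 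You have not closed a gap in an argument; you have restated the open problem and attached a wish list.

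There is also a concrete technical overreach. For $f$ merely $C^{1/2+\eps}$ the pullback $f^\ast\alpha$ has no pointwise meaning ($f$ need not be differentiable anywhere); the remark after Lemma~\ref{la:lipschitzhorizontal} is heuristic, and the paper's rigorous surrogate is Proposition~\ref{pr:approx}, where horizontality survives only as the approximation gain $\|\varphi_\eps^\ast(\alpha)\|_\infty\aleq\eps^{2\gamma-1}$, which must be traded against the generic loss $\|\varphi_\eps^\ast(\kappa)\|_\infty\aleq\eps^{k(\gamma-1)}$ of \eqref{eq:approxgeneral}. It is precisely this trade-off that produces thresholds strictly above $\frac12$ -- the product $\eps^{2\gamma-1}\eps^{k(\gamma-1)}$ in Proposition~\ref{pr:hoeldermapsintoHn} tends to zero only for $\nu>\frac{k+1}{k+2}$, and the linking number itself is only defined for $\sigma>\frac{k}{k+1}$ by Lemma~\ref{la:lconverges} -- so ``horizontality kicks in at $\gamma>\frac12$'' does not transfer to ``the linking invariant vanishes for $\gamma>\frac12$''; your own Gauss-kernel divergence analysis concedes as much. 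Finally, your asserted confidence that a renormalized argument \emph{must} exist and break down exactly at $\frac12$ sits uneasily with the paper's own caveat: the Haj\l{}asz--Mirra construction \cite{H14} comes with numerical evidence pointing toward $C^{2/3}$ rather than $C^{1/2}$ regularity, i.e.\ the conjecture may well be \emph{false} in the very strip your scheme is supposed to cover, which is an additional reason a correct review cannot certify this proposal.
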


Let us remark that there is a construction due to Haj\l{}asz and Mirra \cite{H14} that might serve as a counterexample to the Gromov's conjecture, or at least show that there are embedded curves into the Heisenberg group $\H_1$ that can be extended to a $C^{2/3}$-map (not necessarily embedded). Currently the H\"older regularity of this construction can only be measured from a numerical point of view, but this numerical evidence hints toward a $C^{2/3}$-regularity rather than a $C^{1/2}$-regularity as predicted by Gromov's conjecture. The details will be published in the forthcoming \cite{Hajlasz-Mirra-Schikorra-2016}. Also the recent \cite{WengerYoung18} can be interpreted towards that direction.

\section{Ingredient: rank-condition for Lipschitz-maps}
In this section we state the main reason that -- while working on the Heisenberg-group -- we actually don't need to work with the Heisenberg group: derivatives of Lipschitz maps into the Heisenberg group (below we will see what to do with H\"older maps) have a low rank.

We recall the so-called contact-form $\alpha$ whose kernel is the horizontal space distribution of the Heisenberg group $\H_n$,
\[\tag{\ref{eq:alpha}}
 \alpha := dp_{2n+1} + 2 \sum_{j=1}^n 
  (p_{2j}\, dp_{2j-1}- p_{2j-1}\, dp_{2j}).
\]
Note that
\[
 d\alpha = 4 \sum_{j=1}^n   dp_{2j} \wedge dp_{2j-1}.
\]

From Lemma~\ref{la:lipschitzhorizontal} we learned that any map $f \in \lip(\Omega,\H_n)$ satisfies
\[
f^\ast (\alpha) = 0. 
\]
Clearly, this implies also
\[
f^\ast (d\alpha) = 0. 
\]
It is a not difficult but a lengthy, combinatorial proof to show that any $(n+1)$-form can be decomposed into terms containing $\alpha$ or $d\alpha$. 

Actually the following is well-known to experts as the a version of the Lefschetz-Lemma.

\begin{lemma}\label{la:rank}
For any $k \geq n+1$, any $k$-form $\kappa$ has the form
\[
\kappa = \alpha \wedge \beta + d\alpha \wedge \sigma
\]
for some $(k-1)$-form $\beta$ and some $(k-2)$-form $\sigma$.

In particular, if $f \in \lip(\Omega,\H_n)$, then
\[
 f^\ast (\kappa) = 0\quad \mbox{for any $k$-form $\kappa$}.
\]
\end{lemma}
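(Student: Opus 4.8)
The plan is to reduce the identity to a pointwise statement of symplectic linear algebra in each cotangent fiber, and then to invoke the Lefschetz decomposition. Since the claim is \emph{algebraic} at each point, I fix $p \in \R^{2n+1}$ and work in the cotangent space $V^\ast = \R^{2n+1}$ with basis $dp_1,\dots,dp_{2n+1}$. The first observation is that $\alpha$ together with $dp_1,\dots,dp_{2n}$ is again a basis of $V^\ast$: by \eqref{eq:alpha} the change of basis from $(dp_1,\dots,dp_{2n},dp_{2n+1})$ to $(dp_1,\dots,dp_{2n},\alpha)$ is triangular with $1$'s on the diagonal. Writing $W^\ast := \mathrm{span}(dp_1,\dots,dp_{2n})$ and $\omega := d\alpha = 4\sum_{j=1}^n dp_{2j}\wedge dp_{2j-1}$, we record that $\omega \in \Lambda^2 W^\ast$ and that $\omega$ is a \emph{symplectic} form on the $2n$-dimensional space $W^\ast$, its $n$-th power being a nonzero volume form.

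Next I decompose. Every $k$-form splits uniquely as $\kappa = \alpha \wedge \mu + \nu$ with $\mu \in \Lambda^{k-1} W^\ast$ and $\nu \in \Lambda^k W^\ast$, by collecting the monomials containing the factor $\alpha$ and those that do not. The first summand is already of the desired form with $\beta := \mu$. Hence everything reduces to the purely symplectic claim: \emph{for $k \geq n+1$ every $\nu \in \Lambda^k W^\ast$ is divisible by $\omega$}, i.e. $\nu = \omega \wedge \sigma$ for some $\sigma \in \Lambda^{k-2} W^\ast$, from which one reads off the required $(k-2)$-form.

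The heart of the matter, and the step I expect to be the main obstacle, is this divisibility. It is tempting to argue by pigeonhole: in a Darboux basis $e_1,f_1,\dots,e_n,f_n$ of $W^\ast$ with $\omega = \sum_i e_i\wedge f_i$, any basis $k$-monomial with $k > n$ must contain a full pair $e_i\wedge f_i$. But this only yields divisibility by a \emph{single} $e_i\wedge f_i$, not by $\omega$, so a finer tool is needed. I would use the $\mathfrak{sl}_2$-action underlying the Lefschetz decomposition: set $L := \omega\wedge(\cdot)$, let $\Lambda$ be its adjoint (contraction with the dual bivector of $\omega$), and $H := [L,\Lambda]$. A standard computation yields the relations $[H,L]=2L$, $[H,\Lambda]=-2\Lambda$, together with $H = (k-n)\,\mathrm{id}$ on $\Lambda^k W^\ast$. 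Decomposing $\Lambda^\ast W^\ast$ into irreducible $\mathfrak{sl}_2$-representations and using that in each irreducible the raising operator $L$ is surjective onto every weight space of \emph{strictly positive} weight (such a weight is never the lowest weight of an irreducible, and the preceding weight space is then nonzero and mapped isomorphically), we conclude that $L\colon \Lambda^{k-2}W^\ast \to \Lambda^k W^\ast$ is onto exactly when the target weight $k-n$ is positive, that is, precisely for $k \geq n+1$. This is the required divisibility. Equivalently, one may simply cite the hard Lefschetz theorem, i.e. the vanishing of primitive forms in degree $>n$.

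Finally, the ``in particular'' is immediate. If $f \in \lip(\Omega,\H_n)$ then $f^\ast\alpha = 0$ by Lemma~\ref{la:lipschitzhorizontal}, and consequently $f^\ast(d\alpha) = d(f^\ast\alpha) = 0$. Since $f^\ast$ is an algebra homomorphism on forms, pulling back the decomposition gives
\[
 f^\ast\kappa = f^\ast\alpha \wedge f^\ast\beta + f^\ast(d\alpha)\wedge f^\ast\sigma = 0,
\]
which is the assertion.
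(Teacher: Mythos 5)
Your proof is correct, but it takes a genuinely different route from the paper's. The paper only treats the case $n=1$: it writes a general $2$-form on $\R^3$ in the monomial basis $dy\wedge dz$, $dx\wedge dz$, $dx\wedge dy$ and substitutes $dz = \alpha - 2(y\,dx - x\,dy)$ and $dx\wedge dy = \frac{1}{4}d\alpha$ term by term, explicitly deferring the general case to ``more combinatorical reasoning.'' You instead prove the full statement for all $n$ and all $k\geq n+1$ at once: after splitting off $\alpha$ via the coframe $(dp_1,\dots,dp_{2n},\alpha)$ --- which is the invariant form of the paper's substitution step --- you reduce to divisibility of $\Lambda^k W^\ast$ by the symplectic form $\omega = d\alpha$, and you settle that with the $\mathfrak{sl}_2$-triple $(L,\Lambda,H)$ underlying the Lefschetz decomposition, using $H=(k-n)\,\mathrm{id}$ on $\Lambda^k W^\ast$ and surjectivity of the raising operator onto strictly positive weight spaces. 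You are also right to flag that the naive pigeonhole argument (every long monomial contains some $e_i\wedge f_i$) only gives divisibility by a single Darboux pair and not by $\omega$; that is precisely where a purely combinatorial proof becomes delicate, and your representation-theoretic step is exactly the ``version of the Lefschetz-Lemma'' the paper names but does not actually prove in general. In short: the paper's computation is elementary and self-contained but covers only $\H_1$, while yours is complete for every $n$ at the cost of invoking hard Lefschetz. One small point you leave implicit: the $\beta$ and $\sigma$ in the lemma should be smooth forms, since they are differentiated later (see the term $d\Phi_\eps^\ast(\sigma)$ in the proof of Proposition~\ref{pr:hoeldermapsintoHn}). This follows from your argument with one extra line: the coefficients of $\mu$ and $\nu$ in your $\alpha$-split are polynomial in $p$ times the coefficients of $\kappa$, hence smooth, and since $L\colon \Lambda^{k-2}W^\ast \to \Lambda^k W^\ast$ is a \emph{fixed}, $p$-independent linear surjection, choosing once and for all a linear right inverse yields $\sigma(p)$ depending smoothly on $p$. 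So this is a remark to add, not a gap.
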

\begin{proof}
We only discuss the three-dimensional situation. For the general $2n+1$ more combinatorical reasoning is needed. 

Take $(x,y,z) \in \R^3$ and a $2$-form $\kappa$
\[
 \kappa = \kappa_1\, dy \wedge dz + \kappa_2\, dx \wedge dz +  \kappa_3\, dx \wedge dy.
\]
Observe $d\alpha = 4 dx \wedge dy$, and $\alpha = dz + 2  (y\, dx- x\, dy)$. Thus
\[
 \kappa_1\, dy \wedge dz = \kappa_1\, dy \wedge \alpha -2 \kappa_1 \,y\, dy \wedge dx = \kappa_1\, dy \wedge \alpha -\frac{1}{2} \kappa_1 \,y\, d\alpha,
\]
\[
 \kappa_2\, dx \wedge dz = \kappa_2\, dx \wedge \alpha + 2 \kappa_2\, y\, dx \wedge dy = \kappa_2\, dx \wedge \alpha - \frac{1}{2} \kappa_2\, y\, d\alpha,
\]
and
\[
 \kappa_3\, dx \wedge dy = -\frac{1}{4}\kappa_3\, d\alpha.
\]

\end{proof}

An equivalent formulation for Lipschitz functions (but as we shall see, the above statement is more useful for H\"older functions)
\begin{lemma}
Let $f \in \lip(\Omega,\H_n)$, then
\[
 \rank Df \leq n \quad \mbox{a.e. in $\Omega$}.
\]
\end{lemma}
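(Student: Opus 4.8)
The plan is to reduce the statement to a pointwise linear-algebra fact, combining Rademacher differentiability of $f$ with the vanishing of all top-degree pullbacks supplied by Lemma~\ref{la:rank}. First I would note that by the comparison estimate of Lemma~\ref{la:comparison}, every $f \in \lip(\Omega,\H_n)$ is locally Lipschitz with respect to the Euclidean metric on $\R^{2n+1}$, so Rademacher's theorem guarantees that $f$ is differentiable at almost every $x \in \Omega$. Fixing such a point, write $L := Df(x) \colon \R^d \to \R^{2n+1}$ for the Euclidean differential. The pullback of a form is determined pointwise by $L$: for any smooth $(n+1)$-form $\kappa$ on $\R^{2n+1}$ one has $(f^\ast\kappa)(x) = L^\ast\big(\kappa(f(x))\big)$, where $\kappa(f(x)) \in \Lambda^{n+1}(\R^{2n+1})^\ast$ is the value of $\kappa$ at the image point. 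Since every alternating $(n+1)$-form $\omega$ on $\R^{2n+1}$ arises as $\kappa(f(x))$ for some smooth $\kappa$ (take the constant form $\omega$), Lemma~\ref{la:rank} applied with $k = n+1$ yields $L^\ast\omega = 0$ for all $\omega \in \Lambda^{n+1}(\R^{2n+1})^\ast$.

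It then remains to observe the elementary fact that a linear map $L$ kills every $(n+1)$-form under pullback exactly when $\rank L \leq n$. If $\rank L \leq n$, then for any $v_1,\dots,v_{n+1}$ the images $Lv_1,\dots,Lv_{n+1}$ lie in a subspace of dimension at most $n$, hence are linearly dependent, so $\omega(Lv_1,\dots,Lv_{n+1}) = 0$. Conversely, if $\rank L \geq n+1$ one may choose $v_1,\dots,v_{n+1}$ with $Lv_1,\dots,Lv_{n+1}$ linearly independent and $\omega$ dual to their wedge product, so that $(L^\ast\omega)(v_1,\dots,v_{n+1}) \neq 0$. Applying this to $L = Df(x)$ gives $\rank Df(x) \leq n$ at the chosen point, and therefore almost everywhere in $\Omega$.

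The main obstacle is conceptual rather than computational: one must be sure that Lemma~\ref{la:rank} is being used at full strength, namely that the algebraic decomposition $\kappa = \alpha \wedge \beta + d\alpha \wedge \sigma$ together with $f^\ast\alpha = 0$ and $f^\ast(d\alpha) = 0$ forces $f^\ast\kappa = 0$ for \emph{every} $(n+1)$-form, and that this vanishing is genuinely equivalent to the rank bound. Once these two points are in place the argument is immediate, which is precisely why the formulation in terms of forms is the useful one — and, as the excerpt signals, it is the formulation expected to survive the passage from Lipschitz to H\"older maps.
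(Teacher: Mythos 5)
Your proof is correct, and it matches the paper's (implicit) reasoning: the paper states this lemma without proof, presenting it merely as ``an equivalent formulation'' of Lemma~\ref{la:rank}, and your argument --- Euclidean local Lipschitzness via Lemma~\ref{la:comparison}, Rademacher differentiability, and the pointwise linear-algebra equivalence between $\rank L \leq n$ and $L^\ast\omega = 0$ for all alternating $(n+1)$-forms --- is exactly the standard way to make that equivalence precise. The only cosmetic point is that the a.e.\ vanishing in Lemma~\ref{la:rank} is for each fixed $\kappa$, so one should take the (finite) basis of constant-coefficient $(n+1)$-forms and discard the union of the corresponding null sets before fixing the point $x$; since this space is finite-dimensional the step is immediate.
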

This is a very rigid statement, recall that the $n$-th Heisenberg group $\H_n$ is homeomorphic to $\R^{2n+1}$!

\section{Ingredient: linking number}
\begin{proposition}\label{pr:linking}
Let $k < N-1$ and $\varphi: \S^k \to \R^{N}$ a Lipschitz embedding. Then there exists a smooth $k$-form $\omega$ on $\R^N$ so that
\[
 \int_{\S^k} \varphi^\ast (\omega) \neq 0.
\]
\end{proposition}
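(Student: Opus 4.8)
The statement is equivalent to saying that the functional $\omega\mapsto\int_{\S^k}\varphi^\ast\omega$ on smooth $k$-forms is not identically zero, and the natural certificate for this is a nonvanishing \emph{linking number}. Write $M:=\varphi(\S^k)$, a compact topological $k$-sphere in $\R^N$; the hypothesis $k<N-1$ enters precisely because it makes the complementary dimension $N-k-1$ at least $1$, which is what the construction below needs. The first step is to produce a linking cycle. Since $M\cong\S^k$, Alexander duality in $\S^N=\R^N\cup\{\infty\}$ gives
\[
 \tilde H_{N-k-1}(\S^N\setminus M;\Z)\;\cong\;\tilde H^{k}(\S^k;\Z)\;\cong\;\Z.
\]
I would represent a generator of this group by a smooth closed oriented $(N-k-1)$-cycle $B\subset\R^N\setminus M$. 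Because the duality isomorphism is implemented by the linking pairing with $M$, such a generator satisfies $\mathrm{lk}(M,B)=\pm1\neq0$.

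Next I would convert this topological invariant into an integral of a differential form. The cleanest explicit model is the generalized Gauss linking form: letting $\sigma$ denote the volume form of $\S^{N-1}$ and $\Phi(x,b)=\tfrac{x-b}{|x-b|}$, set
\[
 \lambda:=\frac{1}{\mathrm{vol}(\S^{N-1})}\int_{B}\Phi^\ast\sigma ,
\]
the fibre integration over $B$, which is a smooth $k$-form on $\R^N\setminus B$. It is closed there and dual to $B$ in the sense that $\int_C\lambda=\mathrm{lk}(C,B)$ for every $k$-cycle $C$ disjoint from $B$; in particular $\int_M\lambda=\mathrm{lk}(M,B)\neq0$. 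Since $M$ is compact and $\mathrm{dist}(M,B)>0$, I would fix a cutoff $\chi\in C^\infty(\R^N)$ with $\chi\equiv1$ on a neighborhood of $M$ and $\chi\equiv0$ near $B$, and put $\omega:=\chi\lambda$. Then $\omega$ is a genuinely smooth $k$-form on all of $\R^N$ — it is not closed, which is consistent with $H^k_{dR}(\R^N)=0$ — and $\omega=\lambda$ on a neighborhood of $M$.

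It remains to evaluate $\int_{\S^k}\varphi^\ast\omega$. As $\varphi$ maps into the set $\{\chi\equiv1\}$ we have $\varphi^\ast\omega=\varphi^\ast\lambda$, so everything reduces to the identity
\[
 \int_{\S^k}\varphi^\ast\lambda=\mathrm{lk}(M,B).
\]
I expect this to be the one genuinely analytic point, and the main obstacle: $\varphi$ is merely Lipschitz, so $\varphi^\ast\lambda$ is only defined almost everywhere, and the classical smooth identification of this de Rham integral with the linking number must be justified at this regularity. The plan is to exploit that $\lambda$ is closed on $\R^N\setminus B$: by Stokes' theorem for Lipschitz chains, $\int_{\S^k}\varphi^\ast\lambda$ is unchanged under Lipschitz homotopies of $\varphi$ that avoid $B$. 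Approximating $\varphi$ uniformly by a smooth embedding $\tilde\varphi$ and joining them by the straight-line homotopy (which stays in $\R^N\setminus B$ once $\tilde\varphi$ is close enough, since $\mathrm{dist}(M,B)>0$) then reduces the computation to the smooth case, where $\int_{\S^k}\tilde\varphi^\ast\lambda=\mathrm{lk}(\tilde\varphi(\S^k),B)=\mathrm{lk}(M,B)\neq0$. A pleasant byproduct is that this topological input also rules out degeneracy of the embedding: if the image $M$ were $\mathcal{H}^k$-null then $\int_{\S^k}\varphi^\ast\omega$ would vanish for every $\omega$, contradicting the nonzero linking number, so $\mathcal{H}^k(M)>0$ automatically.
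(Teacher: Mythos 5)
Your proof is correct in outline, but it takes a genuinely different route from the paper. The paper never invokes Alexander duality or the Gauss form: it proves the statement (in fact its H\"older generalization, Proposition~\ref{pr:linkinghoelder}) by an elementary induction over the hemispheres of $\S^\ell$, $\ell=0,\dots,k$. Starting from a $0$-form separating $\varphi(-1)$ from $\varphi(+1)$, at each step it splits $\eta_{\ell-1}=d\omega_{\ell-1}=\gamma_U+\gamma_V$ by a cutoff subordinate to $U=\R^N\setminus\varphi(\S^\ell_+)$ and $V=\R^N\setminus\varphi(\S^\ell_-)$, sets $\omega_\ell:=\gamma_U$, and evaluates $\int_{\S^\ell}\varphi^\ast(\omega_\ell)=-\int_{\S^{\ell-1}}\varphi^\ast(\omega_{\ell-1})$ by Stokes on the lower hemisphere --- a hands-on realization of the Mayer--Vietoris connecting homomorphism, as the paper's own remark explains. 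What your approach buys: an explicit integrand, the identification of the invariant as a concrete linking number $\mathrm{lk}(M,B)=\pm1$, and a conceptually transparent reason the hypothesis $k<N-1$ matters. What the paper's approach buys: it is self-contained (no duality theory, no representability of homology classes by cycles), and --- crucially for its purposes --- the inductive construction transfers verbatim to $C^\sigma$-embeddings with $\sigma>\frac{k}{k+1}$, where $\varphi^\ast\omega$ has no pointwise meaning and the pairing must be defined through the limit \eqref{eq:linking}; note that both constructions output the same key structural property, $d\omega=0$ on a neighborhood of $\varphi(\S^k)$. Three small points you should tighten: (i) a generator of $\tilde H_{N-k-1}(\S^N\setminus M;\Z)$ can always be represented by a \emph{smooth singular} (or polyhedral) cycle, but in general not by an embedded submanifold (Thom), so phrase $B$ as a cycle and do the fibre integration simplexwise --- closedness of $\lambda$ then follows from $\partial B=0$; (ii) your smooth approximation $\tilde\varphi$ need not be an embedding, only uniformly close, so drop that requirement; (iii) Stokes' theorem for the Lipschitz straight-line homotopy avoiding $B$ deserves a word of justification (mollification of the homotopy, using $d\lambda=0$ near its image). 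None of these is a fatal gap.
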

The reason for this to be true is the linking number. Usually the linking number $\mathcal{L}(A,B)$ of a $k$-dimensional (closed) object $A$ and disjoint a (closed) $N-k-1$-dimensional object measures how many times object $A$ winds around object $B$. 'closed' means that $A$ and $B$ have no boundaries (and are in fact a boundary of a $k+1$ and a $N-k$-dimensional object, respectively). For $N=3$, $k=1$ both objects are just curves.

Any $(N-k-1)$-dimensional closed object $B$ can be measured by an closed (and thus exact) $k+1$ differential form $\eta_B = d\omega_B$. This is Poincar\'{e}-duality. The disjointness of $A$ and $B$ is just that $\eta_B$ has no support in $A$.

In algebraic terms, the linking number $\mathcal{L}(A,B)$ is the homology class of $B$ in $H_{N-k-1}(\R^N \backslash A,\Z)$ or equivalently the cohomology class of $\eta_B$ in $H^{k+1}(\R^N \backslash A,\Z)$.

In analytic terms, the linking number is
\[
 \mathcal{L}(A,B) \equiv \mathcal{L}(A,\eta_B) = \int_{A} \omega_B,
\]
which simply means, as is shown in \cite{Hajlasz-Mirra-Schikorra-2016}, that the map
\[
 \eta = d\omega \mapsto \int_{\S^k} \varphi^\ast (\omega) 
\]
is an isomorphism on $H_{k+1}(\R^N \backslash A,\Z)$.

So the statement of Proposition~\ref{pr:linking} is simply saying that if $\varphi(\S^k)$ is an embedded $k$-sphere in $\R^N$, then there exists some object $B$ linked to it. The latter is a standard fact from algebraic topology, and we adapt the standard proof, see e.g. \cite[Corollary 1.29]{Vick-1994}. We will sketch the proof in Section~\ref{s:linkingproof}.

\subsection{Implication for Gromov's theorem (Lipschitz case)}\label{s:lipschitzgromov}
Observe that Proposition~\ref{pr:linking} implies in particular the Lipschitz version of Gromov's result, Theorem~\ref{th:gromov}. 

Let $\Phi: \B^{n+1} \to \H_n$ be a Lipschitz embedding. In particular, $\Phi$ is Lipschitz as a map into $\R^{2n+1}$. Let $\varphi := \Phi \big |_{\S^{n}}$ be the boundary map of $\Phi$, which is of course still an embedding. In view of Proposition~\ref{pr:linking} we find a $n$-form $\omega$ in $\R^{2n+1}$ so that
\[
 0 \neq \int_{\S^n} \varphi^\ast (\omega).
\]
With Stokes' theorem
\[
 =\int_{\B^{n+1}} \Phi^\ast (d\omega).
\]
Since $d\omega$ is an $n+1$-form, and the rank-condition, Lemma~\ref{la:rank}, tells us that
\[
 = 0.
\]
We have a contradiction, so $\Phi$ could not have been an embedding. 

Actually we even showed 
\begin{lemma}\label{la:noextension}
No Lipschitz embedding $\varphi: \S^n \to \H_n$ can be Lipschitz extended to $\Phi: \B^{n+1} \to \H_n$.
\end{lemma}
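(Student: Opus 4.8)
The plan is to show that the statement of Lemma~\ref{la:noextension} is essentially already contained in the argument of Section~\ref{s:lipschitzgromov}, and I would prove it by directly extracting the contradiction that was derived there. The key observation is that we do \emph{not} need $\Phi$ to be an embedding on all of $\B^{n+1}$; we only need $\Phi$ to be a Lipschitz map into $\H_n$ whose boundary trace is the given Lipschitz embedding $\varphi : \S^n \to \H_n$. The point of isolating Lemma~\ref{la:noextension} is precisely that it weakens the hypothesis (no interior injectivity is assumed) while reaching the same impossibility, so the proof should be a careful re-reading of the prior argument with this weakening in mind.

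Concretely, I would argue by contradiction. Suppose such a Lipschitz extension $\Phi : \B^{n+1} \to \H_n$ of the Lipschitz embedding $\varphi$ existed. Since $\varphi$ is a Lipschitz embedding of $\S^n$ into $\H_n \subset \R^{2n+1}$ and here $n < 2n+1 - 1 = 2n$ (as $n \geq 1$), Proposition~\ref{pr:linking} applies and yields a smooth $n$-form $\omega$ on $\R^{2n+1}$ with
\[
 \int_{\S^n} \varphi^\ast(\omega) \neq 0.
\]
Because $\Phi$ is Lipschitz into $\H_n$, it is in particular Lipschitz as a map into $\R^{2n+1}$, and $\Phi\big|_{\S^n} = \varphi$. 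By Stokes' theorem applied to $\Phi$,
\[
 \int_{\S^n} \varphi^\ast(\omega) = \int_{\B^{n+1}} \Phi^\ast(d\omega).
\]
Now $d\omega$ is an $(n+1)$-form on $\R^{2n+1}$, and since $\Phi \in \lip(\B^{n+1},\H_n)$, the rank condition of Lemma~\ref{la:rank} gives $\Phi^\ast(\kappa) = 0$ for every $k$-form with $k \geq n+1$; in particular $\Phi^\ast(d\omega) = 0$ almost everywhere. Hence the right-hand integral vanishes, contradicting the non-vanishing of the left-hand side.

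The step I expect to require the most care — and the only place where the weakened hypothesis matters — is the validity of Stokes' theorem for a merely Lipschitz map $\Phi$, rather than a smooth one. Since $\Phi$ is Lipschitz it is differentiable almost everywhere, $\Phi^\ast(d\omega)$ is a bounded measurable $(n+1)$-form, and one must justify the integration-by-parts identity in this low-regularity setting; this is standard (Lipschitz maps are in $W^{1,\infty}$ and Stokes holds for the pullback of smooth forms under Lipschitz maps), but it is worth flagging because it is exactly the hypothesis we are leaning on. Everything else is purely formal: the embedding hypothesis enters only through $\varphi$ on the boundary to invoke Proposition~\ref{pr:linking}, while the interior behavior of $\Phi$ enters only through the horizontality / rank bound, so no injectivity of $\Phi$ in the interior is needed. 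This is precisely why Lemma~\ref{la:noextension} is a genuine strengthening of the conclusion reached in Section~\ref{s:lipschitzgromov}.
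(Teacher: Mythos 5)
Your proof is correct and follows essentially the same route as the paper's: the argument in Section~\ref{s:lipschitzgromov} (whose closing remark ``actually we even showed'' is precisely this lemma) also only uses injectivity of the boundary map $\varphi$ to invoke Proposition~\ref{pr:linking}, while $\Phi$ enters solely through the rank condition of Lemma~\ref{la:rank} and Stokes' theorem. Your additional care about the validity of Stokes' theorem for merely Lipschitz maps is a legitimate point the paper glosses over, but it does not change the argument.
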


\section{Lipschitz case: \texorpdfstring{$\pi_n^{\lip}(\H_n)$}{}}
\subsection{Non-triviality}
Theorem~\ref{th:gromov} tells us that it is impossible to Lipschitz-embeds objects in to the $\H_n$ if their dimension is larger than $n+1$. This bound on the dimension is sharp, the following was shown by \cite[Section~4]{Balogh-Faessler-2009}, \cite[Theorem~3.2]{Dejarnette-Hajlasz-Lukyanenko-Tyson-2014}, \cite[Example~3.1]{Ekholm-Etnyre-Sullivan-2005}.
\begin{theorem}\label{th:Snembedding}
For any $n\geq 1$ there is a bi-Lipschitz embedding $\varphi: \S^n \to \H_n$.
\end{theorem}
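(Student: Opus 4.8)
The plan is to build $\varphi$ by lifting a suitable \emph{Lagrangian immersion} of $\S^n$ into the horizontal factor $\R^{2n}\cong\mathbb{C}^n$ to a horizontal map into $\H_n$. Writing $\varphi = (u,t)$ with $u:\S^n\to\R^{2n}$ and $t:\S^n\to\R$, the horizontality condition $\varphi^\ast\alpha = 0$ of Lemma~\ref{la:lipschitzhorizontal}, with $\alpha$ as in \eqref{eq:alpha}, reads
\[
 dt = -2\sum_{j=1}^n\big(u_{2j}\,du_{2j-1} - u_{2j-1}\,du_{2j}\big) =: \beta_u.
\]
First I would observe that such a primitive $t$ exists precisely when $\beta_u$ is closed, and since $d\beta_u = -4\sum_j du_{2j}\wedge du_{2j-1} = -\,u^\ast(d\alpha)$, this is exactly the requirement that $u$ be \emph{isotropic} for the standard symplectic form $\sum_j dp_{2j}\wedge dp_{2j-1}$ on $\R^{2n}$. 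Thus the construction reduces to finding an isotropic map of $\S^n$ of maximal dimension, i.e. a Lagrangian immersion.

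Second, I would invoke the classical existence of a Lagrangian immersion $u:\S^n\to\R^{2n}$ (the Whitney sphere), which is isotropic, is an immersion away from finitely many transverse double points, and for $n=1$ is replaced by a figure-eight curve. Because $\S^n$ is simply connected for $n\geq 2$, the closed form $\beta_u$ is exact and $t$ is globally defined; for $n=1$ one must additionally arrange $\oint_{\S^1}\beta_u = 0$, which is precisely the vanishing of the signed enclosed area and is satisfied by the figure-eight whose two lobes contribute opposite signs. This yields a smooth horizontal map $\varphi=(u,t):\S^n\to\H_n$.

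Third — and this is the step I expect to be the main obstacle — I would upgrade the immersion $\varphi$ to an embedding. Injectivity can fail only at the self-intersections of $u$, and at a pair $x\neq y$ with $u(x)=u(y)$ the lifted points $\varphi(x),\varphi(y)$ are distinguished exactly when the heights satisfy $t(x)\neq t(y)$. One therefore has to choose the immersion, and normalize the primitive $t$, so that the heights differ at every double point. For the figure-eight the jump in $t$ across the crossing equals (twice) the area of a lobe, hence is nonzero; for the Whitney sphere the analogous separation can be checked directly, and small isotropic perturbations guarantee transversality and distinctness of heights. Since $\varphi$ is then a smooth injective immersion of the compact manifold $\S^n$, it is a smooth (Euclidean) embedding.

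Finally, I would verify the bi-Lipschitz property in the $\H_n$-metric. The lower bound follows from Lemma~\ref{la:comparison}: on the compact image, $d_{\H_n}(\varphi(x),\varphi(y)) \ageq |\varphi(x)-\varphi(y)| \ageq d_{\S^n}(x,y)$, the last step because $\varphi$ is a smooth Euclidean embedding. For the upper bound I would connect $\varphi(x)$ and $\varphi(y)$ by the curve $\varphi\circ\gamma$, where $\gamma$ is a minimizing geodesic in $\S^n$; since $\varphi$ is horizontal and $C^1$, this curve is admissible in the Carnot--Carath\'eodory sense with length $\aleq d_{\S^n}(x,y)$, so that $d_{\H_n}(\varphi(x),\varphi(y)) \aeq d_{cc}(\varphi(x),\varphi(y)) \aleq d_{\S^n}(x,y)$. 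Together these bounds establish that $\varphi$ is a bi-Lipschitz embedding, completing the proof.
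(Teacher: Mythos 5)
The paper itself gives no proof of Theorem~\ref{th:Snembedding}, citing instead \cite[Section~4]{Balogh-Faessler-2009}, \cite[Theorem~3.2]{Dejarnette-Hajlasz-Lukyanenko-Tyson-2014}, \cite[Example~3.1]{Ekholm-Etnyre-Sullivan-2005}, and your argument is precisely the construction used there: lift a Lagrangian Whitney sphere (for $n=1$, an equal-area figure-eight) to a horizontal/Legendrian sphere via $dt=\beta_u$, separate the double point by the height jump, and get the bi-Lipschitz bounds from horizontality (upper bound via $d_{cc}$) together with Lemma~\ref{la:comparison} (lower bound); this is correct. Two cosmetic points only: the Whitney map is an immersion \emph{everywhere} with one transverse double point (not ``an immersion away from double points''), and no perturbation is needed since the height difference at that double point is nonzero --- its lift is the classical embedded Legendrian ``flying saucer'' of \cite{Ekholm-Etnyre-Sullivan-2005}.
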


Clearly, we can consider $\varphi$ to be an element of $\pi_n^{\lip}(\H_n)$. In view of Lemma~\ref{la:noextension} it is a non-trivial element of $\pi_n^{\lip}(\H_n)$.

\subsection{So what about density?} Note that we have a quantitative way to measure the nontriviality of the homotopy group.
Take $\varphi$ from above. As a map into $\R^{2n+1}$, $\varphi$ is an embedding, so in view of Proposition~\ref{pr:linking} we can find a $n$-form $\omega$ so that
\begin{equation}\label{eq:nontrivialityn}
 \int_{\S^n} \varphi^\ast (\omega) \neq 0.
\end{equation}
Now we let the standard algorithm run and obtain non-density of Lipschitz maps for the Sobolev maps $W^{1,p}(\B^{n+1},\H_n)$, $n < p < n+1$.

Take $\varphi$ and $\omega$ from above so that \eqref{eq:nontrivialityn} holds. Set $\Phi(x) := \Phi(x/|x|) \in W^{1,p}(\B^{n+1},\H_n)$ for any $p < n+1$. This $\Phi$ can not be $W^{1,p}$-approximated by Lipschitz maps in $\lip(\B^{n+1},\H_n)$. If there was an approximation $\Phi_k \to \Phi$ in $W^{1,p}(\B^{n+1},\H_n)$, then on some sphere $r\S^{n}$, $r \in (0,1)$ (we pretend for simplicity that $r=1$)
\[
 \Phi_k\big |_{\S^{n}} \xrightarrow{k\to \infty} \varphi \quad \mbox{in $W^{1,p}(\S^n,\H_n)$}.
\]
Since $\Phi_k$ is a Lipschitz map into $\H_n$ and $d\omega$ is a $n+1$-form, by the rank condition, Lemma~\ref{la:rank}, and Stokes' theorem
\[
 0 = \int_{\B^{n+1}} \Phi_k^\ast (d\omega) = \int_{\S^{n}} \Phi_k^\ast (\omega)
\]
Now $\omega$ is an $n$-form, and thus $|\Phi_k^\ast (\omega)| \leq |D\Phi_k|^n\ w(\Phi_k)$ (for some smooth $w$). Since $\Phi_k \to \varphi \in W^{1,p}(\S^n,\H_n)$, $p > n$, the integral above converges. We thus have
\[
 0 = \lim_{k \to \infty} \int_{\S^{n}} \Phi_k^\ast (\omega) = \int_{\S^{n}} \varphi^\ast (\omega)  \overset{\eqref{eq:nontrivialityn}}{\neq} 0,
\]
a contradiction. We conclude that there is no Lipschitz approximation for $\Phi$, and thus Lipschitz functions are not dense in $W^{1,p}(\B^{n+1},\H_n)$ if $p \in (n,n+1)$.

\section{Lipschitz case: \texorpdfstring{$\pi_{4n-1}^{\lip}(\H_{2n})$}{}}
For this we employ another version of linking number, the one that Hopf \cite{Hopf} used to define his Hopf invariant, and showed that $\pi_{4n-1}(\S^{2n}) \neq 0$.
\subsection{Another linking number: the Hopf invariant}
Let $\varphi: \S^{4n-1} \to \S^{2n}$. Take the volume form $\eta$ of $\S^{2n}$. Then $\varphi^\ast(\eta)$ is a closed form: $d\varphi^\ast(\eta)=\varphi^\ast(d\eta) = 0$, since $d\eta$ is a $2n+1$-form; but $\varphi$ is a map into $\S^{2n}$ so surely its derivative $D\varphi$ can only have rank at most $\leq 2n$. But in $\S^{4n-1}$ any closed $2n$-form is exact, so $\varphi^\ast(\eta) = d\omega_\varphi$. The Hopf invariant is then defined as
\[
 \mathcal{H}(\varphi) = \int_{\S^{4n-1}} \omega_\varphi \wedge \varphi^\ast(\eta).
\]
As explained in \cite{BT82}, $\mathcal{H}(\varphi)$ measures the linking number between the two $(2n-1)$-dimensional ``curves'' $\varphi^{-1}(q)$, $\varphi^{-1}(p)$.

Hopf then showed
\begin{theorem}[Hopf \cite{Hopf}]\label{th:hopf}
\label{la:hopffibration}
For any $n \in \mathbb{N}$ there exists a smooth map
$\varphi: \S^{4n-1} \to \S^{2n}$, such that $\HI (\varphi) \neq 0$.
\end{theorem}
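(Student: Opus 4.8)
The plan is to treat $\mathcal{H}$ as a homotopy invariant and reduce the statement to producing, for each $n$, a single convenient smooth representative on which the integral is visibly nonzero. First I would check that $\mathcal{H}(\varphi)$ is well defined. Since $0 < 2n < 4n-1$ we have $H^{2n}_{dR}(\S^{4n-1}) = 0$, so the closed form $\varphi^\ast(\eta)$ is exact and a primitive $\omega_\varphi$ exists; if $d\omega = d\omega' = \varphi^\ast\eta$ then $\mu := \omega - \omega'$ is closed and $\int_{\S^{4n-1}} \mu \wedge d\omega = \pm\int_{\S^{4n-1}} d(\mu \wedge \omega) = 0$ by Stokes on the closed manifold $\S^{4n-1}$, so the value is independent of the primitive. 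The same Stokes argument along a smooth homotopy shows $\mathcal{H}$ is a homotopy invariant; hence it suffices to exhibit one smooth map of nonzero invariant, and we are free to replace it by any homotopic smooth representative.

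Next I would pass to the geometric picture quoted above from \cite{BT82}: for two regular values $p \neq q$ of a smooth $\varphi$ (Sard), the fibers $\varphi^{-1}(p)$ and $\varphi^{-1}(q)$ are disjoint closed $(2n-1)$-submanifolds of $\S^{4n-1}$, of complementary dimension for linking since $2(2n-1) = (4n-1)-1$, and $\mathcal{H}(\varphi)$ equals their linking number. Thus the task becomes concrete: build a smooth $\varphi : \S^{4n-1} \to \S^{2n}$ whose generic fibers are two $(2n-1)$-spheres with nonzero linking number in $\S^{4n-1}$.

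For the construction I would separate two regimes. In the division-algebra dimensions $n \in \{1,2,4\}$ the classical Hopf map $\S^{4n-1} \to \S^{2n} = \mathbb{K}\mathrm{P}^1$ (for $\mathbb{K} = \mathbb{C},\mathbb{H},\mathbb{O}$) does the job: its fibers are Clifford $(2n-1)$-spheres linking exactly once, so $\mathcal{H} = \pm 1$, which one can also confirm by exhibiting an explicit primitive $\omega_\varphi$ and integrating. For \emph{all} $n$ I would instead take $\varphi$ to be a smooth representative of the Whitehead square $[\iota_{2n},\iota_{2n}] \in \pi_{4n-1}(\S^{2n})$, with $\iota_{2n}$ the identity class. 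Writing $\S^{2n}\times\S^{2n} = (\S^{2n}\vee\S^{2n})\cup_{[\iota_1,\iota_2]} D^{4n}$ for the two inclusions $\iota_1,\iota_2$, and using $\nabla \circ [\iota_1,\iota_2] = [\iota_{2n},\iota_{2n}]$ for the fold map $\nabla \colon \S^{2n}\vee\S^{2n}\to\S^{2n}$, the fold extends to a map $\bar\nabla \colon \S^{2n}\times\S^{2n}\to C_\varphi$ onto the mapping cone $C_\varphi = \S^{2n}\cup_\varphi D^{4n}$ that is a homeomorphism on top cells. Invoking the equivalence (de Rham; \cite{BT82}) of the integral with the cup-product form of the Hopf invariant — the integer $h$ with $u \smile u = h\,v$ in $H^\ast(C_\varphi)$ — one has $\bar\nabla^\ast u = a_1 + a_2$ and $\bar\nabla^\ast v = a_1 a_2$ in $H^\ast(\S^{2n}\times\S^{2n})$, whence $h\, a_1 a_2 = (a_1+a_2)^2 = 2\, a_1 a_2$ (since $a_1^2 = a_2^2 = 0$), forcing $h = \pm 2 \neq 0$.

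The main obstacle is precisely this uniform nonvanishing. The naive hope — a map with two standardly linked fibers giving $\mathcal{H} = \pm 1$ — is \emph{impossible} for $n \notin \{1,2,4\}$ by Adams' Hopf-invariant-one theorem, so one cannot settle the general case by writing down an explicit primitive and integrating. The substance of the proof is therefore the cohomological (cup-product) computation for the Whitehead square, which produces the nonzero even value $\pm 2$ without ever exhibiting $\omega_\varphi$; establishing the equivalence of the cup-product and integral descriptions of $\mathcal{H}$ is the one technical point that must be in place for this argument to prove the theorem exactly as stated.
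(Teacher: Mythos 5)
Your argument is correct, but note that there is no internal proof to compare it against: the paper states this result purely as a citation to Hopf, so any complete argument is necessarily your own. What you give is the standard modern proof, and it is in substance the homotopy-theoretic form of Hopf's original 1935 construction, which produced explicit maps $\S^{4n-1}\to\S^{2n}$ of even Hopf invariant for every $n$ by a join-type construction --- exactly the geometric avatar of the Whitehead square $[\iota_{2n},\iota_{2n}]$ you use. Your preliminary steps are sound: well-definedness follows from $H^{2n}_{dR}(\S^{4n-1})=0$ together with your Stokes computation $d(\mu\wedge\omega)=-\mu\wedge d\omega$ for closed $\mu$ of odd degree $2n-1$, and homotopy invariance follows by the same argument on the cylinder $[0,1]\times\S^{4n-1}$ (whose $2n$-th de Rham cohomology also vanishes, so a global primitive exists there). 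The mapping-cone computation is also right: $\bar\nabla^\ast u=a_1+a_2$ because $\bar\nabla$ restricts to the fold on $\S^{2n}\vee\S^{2n}$ and restriction is an isomorphism in degree $2n$, $\bar\nabla^\ast v=\pm a_1a_2$ because $\bar\nabla$ has degree $\pm1$ on top cells, and $(a_1+a_2)^2=2a_1a_2$ uses both $a_i^2=0$ (K\"unneth) and commutativity in even degree --- worth saying explicitly, since this is where $2n$ even enters and why the construction fails for odd-dimensional targets. You correctly flag the one external input, the equivalence of the integral definition (J.H.C. Whitehead's formula) with the cup-product definition on the mapping cone; this is classical and is available in the reference \cite{BT82} that the paper itself cites for the linking interpretation, so modulo that citation your proof is complete. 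Your remark that Adams' theorem forbids invariant $\pm1$ outside $n\in\{1,2,4\}$, making the even value $\pm2$ the right target for general $n$, is exactly the correct strategic point. Two cosmetic notes: the paper's $\eta$ is the unnormalized volume form, so $\HI(\varphi)$ is the integer invariant times $\bigl(\int_{\S^{2n}}\eta\bigr)^2$, which does not affect nonvanishing; and the linking-number description of generic fibers, while good motivation and consistent with the paper's discussion, is not actually used in your final argument and could be dropped.
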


\subsection{Adaption to the Heisenberg group}
The main observation is that what makes the Hopf invariant actually homotopy invariant is the rank-condition $\rank D\varphi \leq 2n$. 

So take Theorem~\ref{th:hopf} the nontrivial map $\varphi_1 : \S^{4n-1} \to \S^{2n}$ and from Theorem~\ref{th:Snembedding} the bi-Lipschitz embedding $\varphi_2 : \S^{2n} \to \H_{2n}$. We can Lipschitz extend its inverse $\varphi_2^{-1}: \R^{4n+1} \to \R^{2n+1}$. Set $\varphi := \varphi_2 \circ \varphi_1 $ $\in \lip(\S^{4n-1} ,\H_{2n})$. Again, this is an element of $\pi^{\lip}_{4n-1}(\H_{2n})$ and we will show that it is non-trivial.

Assume on the contrary that $\varphi$ is a trivial element of $\pi^{\lip}_{4n-1}(\H_{2n})$. Then we find a Lipschitz extension of $\varphi$, $\Phi \in \lip(\B^{4n},\H_n)$. Set $\Phi_1 := \varphi_2^{-1} \circ \Phi  \in \lip(\B^{4n},\R^{2n+1})$. 

Take $\eta$ the volume form of $\S^{2n}$, so that
\[
 0 \neq  \HI (\varphi_1) = \int_{\S^{4n-1}} \omega_{\varphi_1} \wedge \varphi_1^\ast(\eta)
\]
Now $\Phi_1^\ast(d\eta) = \Phi^\ast((\varphi_2^{-1})^\ast(d\eta)) = 0$, since $(\varphi_2^{-1})^\ast(d\eta)$ is an $(2n+1)$-form, and we have again the rank-condition Lemma~\ref{la:rank}. Thus we find $\omega_{\Phi_1}$ so that $\Phi_1^\ast(\eta) = d\omega_{\Phi_1}$.
We then use Stokes' theorem,
\[
 = \int_{\B^{4n}} d \brac{\omega_{\Phi_1} \wedge \Phi_1^\ast(\eta)} = \int_{\B^{4n}} \Phi_1 (\eta \wedge \eta).
\]
With the rank-condition, Lemma~\ref{la:rank}, since $\eta \wedge \eta$ is a $4n$-form,
\[
 0 \neq  \HI (\varphi_1) =0.
\]
We have our contradiction.

For the density argument we argue as above. Since we have a quantification of the nontriviality, $0 \neq  \HI (\varphi_1)$, we simply need to check convergence for Sobolev spaces.

\section{Approximation and rank conditions for H\"older-maps}
Essentially all the above arguments crucially rely on the rank-condition, that any Lipschitz map $\varphi \in \lip(\Omega,\H_n)$ has $\rank D\varphi \leq n$. 
For H\"older maps $\varphi$, there is no derivative $D\varphi$ which could have a rank. So we approximate H\"older maps $\varphi \in C^\nu(\Omega,\H_n)$ with smooth maps $\varphi_\eps \in C^\infty(\Omega,\R^{2n+1})$ (e.g. by mollification). Note, however, that there is absolutely no reason why the approximations $\varphi_\eps$ are Lipschitz maps as maps into the Heisenberg group. 

The main observation to overcome this issue is the following:

\begin{proposition}\label{pr:approx}
For $\varphi \in C^\gamma(\Omega,\H_n)$ there exist $\varphi_\eps \in C^\infty(\Omega,\R^{2n+1})$ so that $\varphi_\eps \to f$ in $C^\gamma(\Omega,\R^{2n+1})$ and moreover
\begin{equation}\label{eq:firstorderest}
 \|\varphi_\eps^\ast (\alpha) \|_\infty \aleq \eps^{2\gamma-1},
\end{equation}
where $\alpha$ is the contact form \eqref{eq:alpha}, and
\begin{equation}\label{eq:approxgeneral}
 \|\varphi_\eps^\ast (\kappa) \|_\infty \aleq \eps^{k(\gamma-1)},
\end{equation}
for any $k$-form $\kappa$.
\end{proposition}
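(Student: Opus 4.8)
The plan is to take $\varphi_\eps := \varphi * \rho_\eps$, the standard mollification of $\varphi$ at scale $\eps$, where $\rho_\eps$ is a fixed smooth compactly supported mollifier rescaled to scale $\eps$ (so that $\int \rho_\eps = 1$ and $\|\nabla\rho_\eps\|_{L^1}\aleq\eps^{-1}$). That $\varphi_\eps \to \varphi$ uniformly on compact subsets, with uniformly bounded $C^\gamma$-seminorm, is the usual mollification statement, so the content is entirely in the two estimates, of which \eqref{eq:firstorderest} is the delicate one.

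For the general bound \eqref{eq:approxgeneral} I would first record the elementary mollification estimate $\|D\varphi_\eps\|_\infty \aleq \eps^{\gamma-1}$, obtained by writing $\partial_l\varphi_\eps(x)=\int \partial_l\rho_\eps(x-y)\,(\varphi(y)-\varphi(x))\,dy$, using $\int\partial_l\rho_\eps=0$, and combining $|\varphi(y)-\varphi(x)|\aleq|x-y|^\gamma\aleq \eps^\gamma$ on the support with $\|\nabla\rho_\eps\|_{L^1}\aleq\eps^{-1}$. Since the pullback $\varphi_\eps^\ast\kappa$ of a $k$-form is $k$-linear in the first derivatives $D\varphi_\eps$, with coefficients the components of $\kappa$ evaluated along $\varphi_\eps$ (which stays in a fixed compact set for small $\eps$, hence is bounded), one obtains the pointwise bound $|\varphi_\eps^\ast\kappa|\aleq|D\varphi_\eps|^k\aleq\eps^{k(\gamma-1)}$.

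The heart of the matter is \eqref{eq:firstorderest}, where the naive bound gives only $\eps^{\gamma-1}$ and we must gain the extra factor $\eps^{\gamma}$ from horizontality. The key quantitative input is the Hölder refinement of Lemma~\ref{la:lipschitzhorizontal}: abbreviating $a_j=\varphi_{2j-1}$, $b_j=\varphi_{2j}$, $c=\varphi_{2n+1}$, the bound $d_{\H_n}(\varphi(y),\varphi(z))\aleq|y-z|^\gamma$ applied to the Kor\'anyi formula \eqref{eq:dcc} shows that the vertical defect
\[
 E(y,z):=\big(c(y)-c(z)\big)+2\sum_{j=1}^n\big(a_j(y)\,b_j(z)-a_j(z)\,b_j(y)\big)
\]
satisfies $|E(y,z)|\aleq|y-z|^{2\gamma}$, while the horizontal components $a_j,b_j$ are themselves merely $C^\gamma$. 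I would then expand the $e_l$-component of the pullback, substitute the integral representations of $\varphi_\eps$ and $\partial_l\varphi_\eps$, and insert $1=\int\rho_\eps(x-z)\,dz$, writing it as a single double integral against $\rho_\eps(x-z)\,\partial_l\rho_\eps(x-y)$. Using $\int\partial_l\rho_\eps(x-y)\,dy=0$ I subtract the $y$-independent diagonal value, which collapses to $c(z)$ because the cross terms $a_j(z)b_j(z)-a_j(z)b_j(z)$ cancel; the remaining integrand bracket is exactly $E(y,z)$, so that
\[
 (\varphi_\eps^\ast\alpha)_l(x)=\int\!\!\int \rho_\eps(x-z)\,\partial_l\rho_\eps(x-y)\,E(y,z)\,dy\,dz.
\]
On the support $|x-y|,|x-z|\aleq\eps$, hence $|y-z|\aleq\eps$ and $|E(y,z)|\aleq\eps^{2\gamma}$; together with $\int\rho_\eps=1$ and $\|\nabla\rho_\eps\|_{L^1}\aleq\eps^{-1}$ this yields $|(\varphi_\eps^\ast\alpha)_l(x)|\aleq\eps^{2\gamma}\cdot\eps^{-1}$, which is \eqref{eq:firstorderest}.

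The step I expect to be the main obstacle is the algebraic bookkeeping in this last computation: one must recognize, after the diagonal subtraction, that the bracket is precisely the defect $E(y,z)$, whose smallness is controlled quadratically (exponent $2\gamma$) and not merely linearly by the Kor\'anyi metric. This reorganization of the pulled-back contact form into $E$, and the cancellation that reduces the subtracted diagonal to $c(z)$ alone, is where the specific structure of $\alpha$ enters and is the only genuinely non-routine part of the argument.
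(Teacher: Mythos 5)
Your proof is correct and follows exactly the route the paper intends: the paper itself prints no proof of Proposition~\ref{pr:approx} (deferring details to the forthcoming \cite{Hajlasz-Mirra-Schikorra-2016}), but the remark at the end of the proof of Lemma~\ref{la:lipschitzhorizontal} flags precisely your key mechanism, namely that the Kor\'anyi metric \eqref{eq:dcc} controls the vertical defect $E(y,z)$ by $|y-z|^{2\gamma}$, which after your diagonal subtraction in the double mollification integral (where the bracket is indeed exactly $E(y,z)$, since the determinant in \eqref{eq:dcc} equals $a_j(y)b_j(z)-a_j(z)b_j(y)$) gives $\eps^{2\gamma}\cdot\eps^{-1}=\eps^{2\gamma-1}$, while $\|D\varphi_\eps\|_\infty\aleq\eps^{\gamma-1}$ yields \eqref{eq:approxgeneral}. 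Your hedge that mollification gives uniform convergence with uniformly bounded $C^\gamma$-seminorm (hence convergence in $C^{\gamma'}$ for $\gamma'<\gamma$), rather than convergence in the $C^\gamma$-norm itself, is in fact more careful than the paper's phrasing and suffices for all subsequent applications.
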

While \eqref{eq:approxgeneral} is the standard estimate for approximations ($\|D \varphi_\eps\| \aleq \eps^{\gamma-1} [\varphi]_{C^\gamma}$), \eqref{eq:firstorderest} gives us ``a special direction'' in which the approximation is better - if $\gamma > \frac{1}{2}$ it is actually convergent.

In particular, we have the following replacement for $\rank D\phi \leq n+1$:
\begin{proposition}\label{pr:hoeldermapsintoHn}
Let $\Phi \in C^{\nu}(\overline{\B^{k+1}},\H_n)$ with boundary data $\varphi = \Phi \big|_{\S^k}$. If $\nu > \frac{k+1}{k+2}$ and $\kappa$ is any smooth $k$-form on $\R^{2n+1}$, $k \geq n$, for $\Phi_\eps$ the approximation of $\Phi$ as in Proposition~\ref{pr:approx},
\[
 \lim_{\eps \to 0} \int_{\S^{k}} \varphi_\eps^\ast(\kappa) = 0.
\]
\end{proposition}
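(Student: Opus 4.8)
The plan is to reduce the statement to Stokes' theorem, the Lefschetz-type decomposition of Lemma~\ref{la:rank}, and the two approximation estimates \eqref{eq:firstorderest} and \eqref{eq:approxgeneral}. Since $\Phi_\eps \in C^\infty(\overline{\B^{k+1}},\R^{2n+1})$ and $\varphi_\eps = \Phi_\eps\big|_{\S^k}$, Stokes' theorem gives
\[
 \int_{\S^k} \varphi_\eps^\ast(\kappa) = \int_{\B^{k+1}} \Phi_\eps^\ast(d\kappa).
\]
Now $d\kappa$ is a $(k+1)$-form, and since $k \geq n$ we have $k+1 \geq n+1$, so Lemma~\ref{la:rank} lets us write $d\kappa = \alpha \wedge \beta + d\alpha \wedge \sigma$ with a smooth $k$-form $\beta$ and a smooth $(k-1)$-form $\sigma$. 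It therefore suffices to show that both $\int_{\B^{k+1}} \Phi_\eps^\ast(\alpha \wedge \beta)$ and $\int_{\B^{k+1}} \Phi_\eps^\ast(d\alpha \wedge \sigma)$ tend to $0$.

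For the first term I would estimate directly: splitting $\Phi_\eps^\ast(\alpha \wedge \beta) = \Phi_\eps^\ast(\alpha) \wedge \Phi_\eps^\ast(\beta)$, the $1$-form $\Phi_\eps^\ast(\alpha)$ obeys the improved estimate \eqref{eq:firstorderest}, $\|\Phi_\eps^\ast(\alpha)\|_\infty \aleq \eps^{2\nu-1}$, while the $k$-form $\Phi_\eps^\ast(\beta)$ is controlled by \eqref{eq:approxgeneral}, $\|\Phi_\eps^\ast(\beta)\|_\infty \aleq \eps^{k(\nu-1)}$. Integrating over the fixed ball,
\[
 \left| \int_{\B^{k+1}} \Phi_\eps^\ast(\alpha \wedge \beta)\right| \aleq \eps^{2\nu - 1}\,\eps^{k(\nu-1)} = \eps^{(k+2)\nu - (k+1)},
\]
which tends to $0$ precisely because $\nu > \frac{k+1}{k+2}$.

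The $d\alpha$-term is the main obstacle. A direct bound fails, since $\Phi_\eps^\ast(d\alpha)$ is a $2$-form and \eqref{eq:approxgeneral} only yields $\|\Phi_\eps^\ast(d\alpha)\|_\infty \aleq \eps^{2(\nu-1)}$, producing the divergent power $\eps^{(k+1)(\nu-1)}$. The remedy is to write $\Phi_\eps^\ast(d\alpha) = d\,\Phi_\eps^\ast(\alpha)$ and integrate by parts, moving the derivative off $\alpha$ (whose pullback carries the good estimate) onto the fixed smooth $\sigma$: by Leibniz and Stokes,
\[
 \int_{\B^{k+1}} d\,\Phi_\eps^\ast(\alpha) \wedge \Phi_\eps^\ast(\sigma) = \int_{\S^k} \varphi_\eps^\ast(\alpha \wedge \sigma) + \int_{\B^{k+1}} \Phi_\eps^\ast(\alpha) \wedge \Phi_\eps^\ast(d\sigma).
\]
In the boundary integral the restriction of $\Phi_\eps^\ast(\alpha)$ to $\S^k$ still satisfies \eqref{eq:firstorderest}, so that term is bounded by $\eps^{2\nu-1}\,\eps^{(k-1)(\nu-1)} = \eps^{(k+1)\nu - k}$ and vanishes since $\nu > \frac{k+1}{k+2} > \frac{k}{k+1}$; in the volume integral $d\sigma$ is again a $k$-form, giving $\eps^{2\nu-1}\,\eps^{k(\nu-1)} = \eps^{(k+2)\nu-(k+1)} \to 0$. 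Combining the three contributions proves $\int_{\S^k}\varphi_\eps^\ast(\kappa) \to 0$. I expect the bookkeeping of the integration by parts (and checking that \eqref{eq:firstorderest} survives restriction to the boundary sphere) to be the only delicate point; the threshold $\nu > \frac{k+1}{k+2}$ is exactly what renders the worst exponent $(k+2)\nu - (k+1)$ positive.
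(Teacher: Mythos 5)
Your proposal is correct and follows essentially the same route as the paper's proof: Stokes' theorem, the decomposition $d\kappa = \alpha\wedge\beta + d\alpha\wedge\sigma$ from Lemma~\ref{la:rank}, and a second application of Stokes to move the derivative off $\Phi_\eps^\ast(\alpha)$ in the $d\alpha\wedge\sigma$ term, after which \eqref{eq:firstorderest} and \eqref{eq:approxgeneral} close the argument. Your term-by-term exponent bookkeeping (worst exponent $(k+2)\nu-(k+1)$, with the boundary term only needing $\nu > \frac{k}{k+1}$) is in fact slightly more explicit than the paper's single displayed bound $\eps^{2\gamma-1}\eps^{k(\gamma-1)}$, but the content is identical.
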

\begin{proof}
We have with Stokes' theorem
\[
 \int_{\S^{k}} \varphi_\eps^\ast(\kappa) = \int_{\B^{k+1}} \Phi_\eps^\ast(d\kappa).
\]
In view of Lemma~\ref{la:rank}, $d\kappa = \alpha \wedge \beta + d\alpha \wedge \sigma$
\[
 = \int_{\B^{k+1}} \Phi_\eps^\ast(\alpha)\wedge \Phi_\eps^\ast(\beta)+ \int_{\B^{k+1}} \Phi_\eps^\ast(d\alpha)\wedge \Phi_\eps^\ast(\sigma)
\]
and again Stokes' theorem
\[
 = \int_{\B^{k+1}} \Phi_\eps^\ast(\alpha)\wedge \Phi_\eps^\ast(\beta) + \int_{\S^{k}} \Phi_\eps^\ast(\alpha)\wedge \Phi_\eps^\ast(\sigma) - \int_{\B^{k+1}} \Phi_\eps^\ast(\alpha)\wedge d\Phi_\eps^\ast(\sigma).
\]
Now with \eqref{eq:firstorderest} and \eqref{eq:approxgeneral},
\[
 \left |\int_{\S^{k}} \varphi_\eps^\ast(\kappa)\right | \aleq \eps^{2\gamma-1} \eps^{k(\gamma-1)} \xrightarrow{\eps \to 0} 0,
\]
whenever $\nu > \frac{k+1}{k+2}$.

\end{proof}

\section{The linking number: Proof of Proposition~\ref{pr:linking}}\label{s:linkingproof}
For H\"older maps we need to adapt Proposition~\ref{pr:linking}.

Let $k < N-1$ and $\varphi: \S^k \to \R^N$ be a $C^{\sigma}$-embedding for $\sigma > \frac{k}{k+1}$. For an exact form $\eta = d\omega \in C^\infty(\Ep^{k+1} \R^N)$, we define the linking number between $\eta$ and $\varphi(\S^k)$ by
\begin{equation}\label{eq:linking}
 \mathcal{L}(\varphi(\S^k),\eta) := \lim_{\eps \to 0} \int_{\S^k} \varphi_\eps^\ast(\omega).
\end{equation}
Here, $\varphi_\eps$ is any smooth approximation of $\varphi$ in $C^\sigma$.

\begin{lemma}\label{la:lconverges}
If $\sigma > \frac{k}{k+1}$, \eqref{eq:linking} converges and is independent of the choice of the approximation.
\end{lemma}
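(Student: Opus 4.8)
The plan is to show that the limit in \eqref{eq:linking} exists and is independent of the choice of smooth approximation $\varphi_\eps \to \varphi$ in $C^\sigma$. The key analytic tool is the estimate \eqref{eq:approxgeneral} from Proposition~\ref{pr:approx} applied to the ambient Euclidean setting, which controls the oscillation of pulled-back forms in terms of the mollification scale. The structure of the argument is a Cauchy/telescoping estimate: I view $\varphi_\eps$ as a homotopy (in the scale parameter $\eps$) and use Stokes' theorem to convert the difference of two boundary integrals into an integral over a cylinder, where the smallness of the interpolating derivatives forces the difference to vanish as the scale goes to zero.

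Concretely, first I would fix two smooth approximations $\varphi_\eps$ and $\varphi_{\eps'}$, or more simply compare $\varphi_\eps$ at two different scales. I would build a smooth homotopy $\Psi: [0,1] \times \S^k \to \R^N$ interpolating between them, for instance $\Psi(t,\cdot) = \varphi_{\eps(t)}$ along a smooth path of scales, or the affine interpolation $\Psi(t,x) = (1-t)\varphi_\eps(x) + t\,\varphi_{\eps'}(x)$, which lands in $\R^N$ (here we crucially use that the target is the linear space $\R^N$, not $\H_n$, so convex combinations are allowed). Since $\omega$ is a smooth $k$-form on $\R^N$ and $\eta = d\omega$, Stokes' theorem on the cylinder $[0,1]\times\S^k$ gives
\[
 \int_{\S^k} \varphi_{\eps'}^\ast(\omega) - \int_{\S^k} \varphi_{\eps}^\ast(\omega) = \int_{[0,1]\times \S^k} \Psi^\ast(d\omega) = \int_{[0,1]\times \S^k} \Psi^\ast(\eta).
\]

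Next I would estimate the right-hand side. The interpolating map $\Psi$ has spatial derivatives of size $\aleq \eps^{\sigma-1}$ (using the larger of the two scales) and a bounded derivative in the $t$-direction controlled by $\|\varphi_\eps - \varphi_{\eps'}\|_\infty \aleq \eps^{\sigma}$. Since $\eta$ is a $(k+1)$-form and $\Psi$ is defined on a $(k+1)$-dimensional cylinder, each term in $\Psi^\ast(\eta)$ contains exactly one $dt$-factor and $k$ spatial-derivative factors. Thus $|\Psi^\ast(\eta)| \aleq \eps^\sigma \cdot (\eps^{\sigma-1})^k = \eps^{\sigma(k+1)-k}$, which tends to $0$ precisely when $\sigma(k+1) - k > 0$, i.e. $\sigma > \frac{k}{k+1}$. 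This shows $\{\int_{\S^k}\varphi_\eps^\ast(\omega)\}_\eps$ is Cauchy as $\eps \to 0$, so the limit exists; running the same estimate between two arbitrary approximations $\varphi_\eps$, $\widetilde\varphi_\eps$ of the same map $\varphi$ (whose $C^0$-distance is also $o(\eps^\sigma)$-controllable, or at least $\aleq \eps^\sigma$) shows the limit is independent of the choice.

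The main obstacle is bookkeeping the derivative counting in $\Psi^\ast(\eta)$ correctly and verifying that the $t$-derivative genuinely contributes a gain of a full power of the $C^0$-distance (rather than just a bounded factor): one must confirm that every monomial in the pullback of the $(k+1)$-form is forced to differentiate the $t$-variable exactly once, so that the borderline Hölder threshold $\sigma > \frac{k}{k+1}$ emerges exactly. A secondary technical point is ensuring the interpolation $\Psi$ satisfies the required $C^\sigma$ bounds uniformly; this follows from standard mollification estimates, but one should state cleanly that $\|D\varphi_\eps\|_\infty \aleq \eps^{\sigma-1}[\varphi]_{C^\sigma}$ and $\|\varphi_\eps - \varphi\|_\infty \aleq \eps^\sigma [\varphi]_{C^\sigma}$ before invoking them. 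Once these estimates are in place, the convergence and well-definedness are immediate.
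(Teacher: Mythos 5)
Your argument is correct in substance but takes a genuinely different route from the paper. The paper proves the lemma via the Brezis--Nguyen trick: it takes the \emph{harmonic extension} $\Phi_\eps$ of $\varphi_\eps$ to $\B^{k+1}$, converts $\int_{\S^k}\varphi_\eps^\ast(\omega)$ into $\int_{\B^{k+1}}\Phi_\eps^\ast(d\omega)$ by Stokes' theorem, bounds this by $\|D\Phi_\eps\|_{L^{k+1}(\B^{k+1})}^{k+1}$, and then invokes the trace embedding $W^{1,k+1}(\B^{k+1})\hookrightarrow W^{\frac{k}{k+1},k+1}(\S^k)$ together with $C^\sigma \subset W^{\frac{k}{k+1},k+1}$ for $\sigma>\frac{k}{k+1}$, with Cauchy-ness obtained by running the same estimate on differences. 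You instead fill the cylinder $[0,1]\times\S^k$ with the affine homotopy between two approximations and count derivatives directly: exactly one $dt$-factor carrying $\|\varphi_\eps-\varphi_{\eps'}\|_\infty\lesssim \eps^\sigma$ (which is indeed forced, since a monomial with no $dt$ would be a $(k+1)$-form in $k$ spatial variables and vanishes), and $k$ spatial factors of size $\eps^{\sigma-1}$, yielding $\eps^{\sigma(k+1)-k}$. This is the classical cylinder argument in the style of Z\"ust and LeDonne--Z\"ust; it is more elementary (no fractional Sobolev spaces, no harmonic extension) and gives explicit rates in $\eps$, whereas the paper's functional-analytic route buys control in the weaker norm $W^{\frac{k}{k+1},k+1}$, which fits the Sobolev-density applications elsewhere in the paper.

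Two bookkeeping points need repair, both within reach of your outline. First, your single-comparison estimate does not directly give Cauchy-ness for widely separated scales: if $\eps'\ll\eps$, the spatial derivative bound is governed by the \emph{smaller} scale (since $\sigma-1<0$ one has $(\eps')^{\sigma-1}\gg\eps^{\sigma-1}$; your parenthetical ``using the larger of the two scales'' has this backwards), and $\eps^{\sigma}(\eps')^{k(\sigma-1)}$ need not be small. You must carry out what your opening sentence promises: compare neighboring dyadic scales $\eps_j=2^{-j}$, where each step costs $\lesssim 2^{-j(\sigma(k+1)-k)}$, and sum the geometric series; the summability of the tail is exactly the condition $\sigma>\frac{k}{k+1}$. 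Second, independence of the approximation requires the competing family to obey the same quantitative rates; for an arbitrary smooth family converging in $C^\sigma$ you should note that its $C^\sigma$ seminorms are uniformly bounded, mollify it at a matched scale, and compare to the canonical mollification of $\varphi$ via the same cylinder estimate. With these standard repairs the proof is complete and correct.
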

\begin{proof}
This can be proven in various ways: with the help of paraproducts \cite{Sickel-Youssfi-1999a}, Fourier transform estimates a la \cite{Tartar84}. Actually this convergence is in some sense related to ``integration by compensation'' for Jacobians, as observed by Coifman-Lions-Meyer-Semmes \cite{CLMS}, see also \cite{LenzmanN-Schikorra-commutators}). 

The simplest argument (for our purposes) is a beautiful trick, due to Brezis and Nguyen \cite{Brezis-NguyeN-2011}. Take $\Phi_\eps$ the harmonic extension of $\varphi_\eps$ in $\B^{k+1}$, $\lap \Phi_\eps = 0$, $\Phi_\eps \big |_{\S^{k}} = \varphi_\eps$. Then, by Stokes theorem,
\[
\int_{\S^k} \varphi_\eps^\ast(\omega) = \int_{\B^{k+1}} \Phi_\eps^\ast(d\omega).
\]
Now observe that $d\omega$ is a (bounded) $k+1$-form, so
\[
\left |\int_{\B^{k+1}} \Phi_\eps^\ast(d\omega) \right |\aleq \|D \Phi_\eps\|_{L^{k+1}(\B^{k+1})}^{k+1}.
\]
But $\Phi_\eps$ is an extension of $\varphi_\eps$, in other words, $\varphi_\eps$ is the trace of the harmonic function $\Phi_\eps$. Trace theorems for Sobolev mappings $W^{1,k+1}(\B^{k+1}) \hookrightarrow W^{\frac{k}{k+1},k+1}(\partial \B^{k+1})$ imply
\[
 \left |\int_{\B^{k+1}} \Phi_\eps^\ast(d\omega) \right |\aleq \| \varphi_\eps\|^{k+1}_{W^{\frac{k}{k+1},k+1}(\S^k)} \aleq \|\varphi_\eps \|_{C^{\sigma}}^{k+1}.
\]
Using this argument one can show that 
\[
 \int_{\S^k} \varphi_\eps^\ast(\omega) 
\]
is a Cauchy sequence as $\eps \to 0$, in particular, \eqref{eq:linking} is converging.
\end{proof}
Observe that for maps as in Proposition~\ref{pr:hoeldermapsintoHn} we thus have that the linking number is necessarily zero, which then -- just as in Section~\ref{s:lipschitzgromov} contradicts the following proposition, which is simply the extension to H\"older maps from Proposition~\ref{pr:linking}.

\begin{proposition}\label{pr:linkinghoelder}
Let $k < N-1$ and $\varphi: \S^k \to \R^N$ a $C^{\sigma}$-embedding for $\sigma > \frac{k}{k+1}$. Then there exists a smooth $k$-form $\omega$ on $\R^N$ so that
\[
 \mathcal{L}(\varphi(\S^k),d\omega) \neq 0.
\]
\end{proposition}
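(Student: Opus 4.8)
The plan is to reduce the statement to a purely topological one and then transport it through the regularization in the definition \eqref{eq:linking}. The crucial observation is that the form $\omega$ may be chosen depending only on the \emph{image} $A := \varphi(\S^k)$, which is a topologically embedded $k$-sphere in $\R^N$ (a $C^\sigma$-embedding is in particular a homeomorphism onto its image), and not on the H\"older regularity of $\varphi$. Thus I would first invoke exactly the same construction as in the Lipschitz case, namely the Alexander-duality argument behind Proposition~\ref{pr:linking} adapting \cite[Corollary~1.29]{Vick-1994}: since $k < N-1$ and $A \cong \S^k$ is embedded, there is a linking cycle $B$ of dimension $N-k-1$ disjoint from $A$, Poincar\'e-dual to a smooth closed $(k+1)$-form $\eta = d\omega$ whose support is a small neighborhood $V$ of $B$, and such that the topological linking number of $A$ with $B$ is nonzero. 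I may shrink $V$ so that $\dist(A,\overline V) =: 2\delta > 0$; note that on $\R^N \setminus V$ the form $\omega$ is closed, since there $d\omega = \eta = 0$.

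Next I would show that the regularized quantity \eqref{eq:linking} agrees, for this $\omega$, with the topological linking number. Fix any smooth approximation $\varphi_\eps \to \varphi$ in $C^\sigma$; in particular $\varphi_\eps \to \varphi$ uniformly. For $\eps$ small enough that $\|\varphi_\eps - \varphi\|_\infty < \delta$, the image $\varphi_\eps(\S^k)$ lies in the $\delta$-neighborhood of $A$, hence inside $\R^N\setminus V$, where $\omega$ is closed. Moreover, for two such small parameters $\eps,\eps'$ the straight-line homotopy $H(t,\cdot) = (1-t)\varphi_\eps + t\varphi_{\eps'}$ stays within $\delta$ of $A$ by convexity of balls, so it maps into $\R^N\setminus V$ as well. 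Since $\omega$ is closed there, Stokes' theorem applied to $H$ gives
\[
 \int_{\S^k}\varphi_\eps^\ast(\omega) - \int_{\S^k}\varphi_{\eps'}^\ast(\omega) = \int_{[0,1]\times\S^k} H^\ast(d\omega) = 0.
\]
Hence $\int_{\S^k}\varphi_\eps^\ast(\omega)$ is independent of $\eps$ for $\eps$ small; together with Lemma~\ref{la:lconverges} this common value is precisely $\mathcal{L}(\varphi(\S^k),d\omega)$.

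Finally I would identify this value with the topological linking number and conclude it is nonzero. For a fixed small $\eps_0$, the smooth map $\varphi_{\eps_0}: \S^k \to \R^N\setminus V$ is homotopic inside $\R^N\setminus V$ to $\varphi$ (again by the straight-line homotopy, which stays within $\delta$ of $A$), so it represents the class $\varphi_\ast[\S^k] \in H_k(\R^N\setminus V;\Z)$. Because $\omega$ is closed on $\R^N\setminus V$, the integral $\int_{\S^k}\varphi_{\eps_0}^\ast(\omega)$ computes the de Rham pairing $\langle [\omega], \varphi_\ast[\S^k]\rangle$, which by the construction of $\omega$ via Alexander duality is exactly the (nonzero) topological linking number of $A$ and $B$. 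Combining the three steps,
\[
 \mathcal{L}(\varphi(\S^k),d\omega) = \int_{\S^k}\varphi_{\eps_0}^\ast(\omega) = \langle [\omega],\varphi_\ast[\S^k]\rangle \neq 0.
\]

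The analytic difficulty --- that $\varphi^\ast\omega$ has no pointwise meaning for a merely H\"older $\varphi$ --- is already absorbed into Lemma~\ref{la:lconverges}, which guarantees convergence precisely in the range $\sigma > \frac{k}{k+1}$. The genuinely new point, and the step I expect to require the most care, is the identification in the last paragraph: one must ensure that integrating the closed form $\omega$ over the \emph{non-embedded} smooth approximants $\varphi_\eps$ still recovers the homotopy-invariant topological pairing and not some spurious contribution. This is what forces the approximants to remain in the region $\R^N\setminus V$ where $\omega$ is closed, and why uniform (rather than merely $C^\sigma$) convergence of $\varphi_\eps$ to the embedding $\varphi$ is the decisive input for the homotopy argument.
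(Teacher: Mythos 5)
Your proposal is correct, and it rests on the same two pillars as the paper's argument---a form $\omega$ whose differential is supported at positive distance from $\varphi(\S^k)$, and the observation that the regularized integrals stabilize because the smooth approximants eventually map into the region where $d\omega=0$---but you organize these pillars in the opposite way. The paper \emph{constructs} $\omega$ explicitly by induction over hemispheres: with $U=\R^N\setminus\varphi(\S^\ell_+)$ and $V=\R^N\setminus\varphi(\S^\ell_-)$ it splits $\eta_{\ell-1}=\gamma_U+\gamma_V$ by a cutoff, sets $\omega_\ell:=\gamma_U$, and propagates the nonvanishing of $\int_{\S^\ell}\varphi^\ast(\omega_\ell)$ via Stokes' theorem on $\S^\ell_-$ (the Mayer--Vietoris connecting homomorphism made concrete); the H\"older issue is then dispatched with the remark that ``everything is just a matter of supports.'' You instead black-box the topological input---the existence of a smooth exact $(k+1)$-form $\eta=d\omega$ supported away from $A=\varphi(\S^k)$ with $\langle[\omega],\varphi_\ast[\S^k]\rangle\neq 0$---by citing the duality argument behind Proposition~\ref{pr:linking} and \cite[Corollary~1.29]{Vick-1994}, and you spend your effort making explicit precisely the step the paper leaves implicit: that once $\|\varphi_\eps-\varphi\|_\infty<\delta$ the straight-line homotopies remain in $\R^N\setminus V$, where $\omega$ is closed, so $\int_{\S^k}\varphi_\eps^\ast(\omega)$ is eventually constant in $\eps$ and equals the homotopy-invariant de Rham/singular pairing. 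Two caveats are worth recording. First, Proposition~\ref{pr:linking} as \emph{stated} concerns Lipschitz $\varphi$ and the pointwise integral $\int_{\S^k}\varphi^\ast(\omega)$; what you actually need is its \emph{proof} applied to a merely topological embedding, giving the pairing against the singular class $\varphi_\ast[\S^k]$---and that proof is exactly the hemisphere induction of Section~\ref{s:linkingproof}, which indeed uses only that $\varphi$ is a homeomorphism onto its image, as you correctly anticipate when you insist that $\omega$ depends only on the image $A$. So your black box is legitimate, but its content is the paper's own induction rather than an external shortcut. Second, your route exposes a small bonus that the paper's phrasing obscures: for your particular $\omega$ the limit in \eqref{eq:linking} exists and is approximation-independent by eventual constancy alone, so uniform convergence of $\varphi_\eps$ suffices; the threshold $\sigma>\frac{k}{k+1}$ of Lemma~\ref{la:lconverges} is needed only to make $\mathcal{L}(\varphi(\S^k),\cdot)$ well defined for \emph{arbitrary} exact forms and approximations, not to obtain the nonvanishing for this one.
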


\begin{remark}
So we can \emph{measure} the linking number for $C^{\frac{k}{k+1}+\eps}$-embeddings $\varphi: \S^k \to \R^{2n+1}$. By standard algebraic arguments (essentially the arguments we do below), for all $C^\sigma$-embedding, even if $\sigma \leq \frac{k}{k+1}$ there is a linked object $B$ so that the \emph{algebraic} linking number is nontrivial -- which is just saying that the cohomology group $H_c^{k+1}(\R^{2n+1}\backslash \varphi(\S^k)) \neq 0$. But this algebraic linking number we cannot ``measure'' in analytic terms.

Moreover, note that in Proposition~\ref{pr:hoeldermapsintoHn} we can only show for $C^{\frac{k+1}{k+2}+\eps}$-embeddings $\varphi$ into the Heisenberg group that our \emph{analytic} linking number is always zero.

So only when $\varphi$ is a  $C^{\frac{k+1}{k+2}+\eps}$-embedding into the Heisenberg group can we compare the algebraic linking number (nonzero, since it is an embedding) and the analytic linking number (zero, since it is a $C^{\frac{k+1}{k+2}+\eps}$-map into the Heisenberg group).
\end{remark}

\subsection{Proof of Proposition~\ref{pr:linkinghoelder}}
We split a sphere $\S^\ell$ into its equator, which we denote $\S^{\ell-1}$ and its closed upper hemisphere $S^\ell_+$ and lower hemisphere $\S^\ell_-$, i.e. $\S^{\ell}_+ \cap \S^{\ell}_- = \S^{\ell-1}$. We argue by induction on the dimension of the sphere $\S^\ell$, $\ell = 0,\ldots,k$.

We will pretend that t$\varphi$ is a Lipschitz map purely for notational reasons. The argument works exactly as is for the $C^\sigma$-embeddings, everything is just a matter of supports.

The induction claim is
\[
 \tag{I} \forall \ell = 0,\ldots, k: \quad \exists \mbox{ $\omega_\ell$, a smooth $\ell$-form, $d\omega_\ell = 0$ around $\varphi(\S^\ell)$, and $\int_{\S^\ell} \varphi^\ast(\omega_\ell) \neq 0$}
\]

\subsubsection*{Case $\ell = 0$}
By the decomposition above, $\S^0$ are simply to points, which we may denote with $\{-1,+1\}$. Since $\varphi$ is an embedding, $\varphi(-1) \neq \varphi(+1)$. So we just pick $\omega_0$ a $0$-form (i.e. function on $\R^N$) to be constantly $1$ around $\varphi(-1)$ and constantly $-1$ around $\varphi(+1)$. Then $d\omega_0 = 0$ around $\varphi(\S^0)$, and
\[
 \int_{\S^0} \varphi^\ast(\omega_0) = \omega_0(\varphi(1))-\omega_0(\varphi(-1)) = 2\neq 0
\]

\subsubsection*{Case $(\ell-1) \to \ell$}
We assume that we have found an $\ell$-form $\omega_{\ell-1}$, $\eta_{\ell-1} := d\omega_{\ell-1}$ is zero around $\varphi(\S^{\ell-1})$, and
\[
 \int_{\S^{\ell-1}} \varphi^\ast(\omega_{\ell-1}) \neq 0.
\]
Having $\eta_{\ell-1} = d\omega_{\ell-1}$ we first construct a closed $(\ell+1)$-form $\eta_{\ell}$.
%

Define open subsets of $\R^N$ as follows: $U: = \R^N \backslash \varphi(\S^\ell_+)$, $V: = \R^N \backslash \varphi(\S^\ell_-)$.

The support of $\eta_{\ell-1}$ is bounded away from $\varphi(\S^{\ell-1})$, thus \[\supp \eta_{\ell-1} \subset \R^N \backslash \varphi(\S^{\ell-1}) = U \cup V\]
By a cutoff-argument, since , we can find two $\ell$-forms $\gamma_{U}$ and $\gamma_{V}$ supported in $U$ and $V$, respectively, and so that
\begin{equation}\label{eq:etalm1split}
 \eta_{\ell-1} = \gamma_U + \gamma_V
\end{equation}
We define
\[
 \omega_{\ell} := \gamma_U, \quad \eta_{\ell} := d\gamma_U.
\]
Since $d\eta_{\ell-1}=d\circ d\omega_{\ell-1} = 0$ we actually have 
\[
\eta_{\ell} =d\gamma_U = -d\gamma_V. 
\]
In particular,
\[
 \supp \eta_\ell \subset \supp \gamma_U  \cap \supp \gamma_V \subset U \cap V \subset \R^N \backslash \varphi(\S^\ell).
\]
Thus, we have found
\[
\omega_\ell,
 \mbox{ smooth $\ell$-form, $d\omega_\ell \equiv \eta_\ell = 0$ around $\varphi(\S^\ell)$},
\] i.e., $\omega_\ell$ is almost as needed for the induction claim $(I)$, we just need to confirm that
\begin{equation}\label{eq:stepintnzero}
 \int_{\S^{\ell}} \varphi^\ast(\omega_{\ell}) \neq 0.
\end{equation}
So let us compute \eqref{eq:stepintnzero}. In view of the support of $\gamma_U$ and $\gamma_V$ and \eqref{eq:etalm1split}
\[
 \int_{\S^{\ell}} \varphi^\ast(\omega_{\ell}) = \int_{\S^{\ell}_-} \varphi^\ast(\gamma_U) = \int_{\S^{\ell}_-} \varphi^\ast(\eta_{\ell-1}-\gamma_V) = \int_{\S^{\ell}_-} \varphi^\ast(\eta_{\ell-1}).
\]
Now we use Stokes' theorem on $\S^\ell_-$. Observe that by the orientation of $\partial \S^\ell_- = -\S^{\ell-1}$ we get a sign.
\[
 \int_{\S^{\ell}_-} \varphi^\ast(\eta_{\ell-1}) = \int_{\S^{\ell}_-} \varphi^\ast(d\omega_{\ell-1}) =  \int_{\S^{\ell-1}} \varphi^\ast(\omega_{\ell}).
\]
That is, we have by induction hypothesis
\[
 \int_{\S^{\ell}} \varphi^\ast(\omega_{\ell})  = -\int_{\S^{\ell-1}} \varphi^\ast(\omega_{\ell-1}) \neq 0,
\]
and \eqref{eq:stepintnzero} is proven.

\begin{remark}
Let us put the above argument into perspective of algebraic topology. By induction hypothesis, $\eta_{\ell-1}$ is an element of the cohomology group $H^{\ell}(\R^N \backslash \varphi(\S^{\ell-1}))$. We just used the exact Mayer-Vietoris sequence,
\[
\ldots \to H^{\ell}(U) \oplus H^{\ell}(U) \to  H^{\ell}(U \cup V) \xrightarrow{c} H^{\ell+1}(U \cap V) \to H^{\ell+1}(U) \oplus H^{\ell+1}(U) \to \ldots 
\]
where we observe $U \cap V = \R^N \backslash \varphi(\S^{\ell})$, $U \cup V = \R^N \backslash \varphi(\S^{\ell-1})$. Also, since $U$ and $V$ are homeomorphic to $\R^N$ (that is $\S^{N}$) with a cube taken away, \[H^{\ell+1}(U) = H^{\ell+1}(V) = H^{\ell}(U) =H^{\ell}(V) = 0.\]
Thus, the Mayer-Vietoris sequence is simply
\[
0 \to  H^{\ell}( \R^N \backslash \varphi(\S^{\ell-1})) \xrightarrow{c} H^{\ell+1}(\R^N \backslash \varphi(\S^{\ell})) \to 0.
\]
This just means that the connecting homomorphism $c: H^{\ell}(\R^N \backslash \varphi(\S^{\ell-1})) \xrightarrow{c} H^{\ell+1}(\R^N \backslash \varphi(\S^{\ell})))$ is an isomorphism. On the other hand $c$ is known, and all we did above is set $\eta_{\ell} := c(\eta_{\ell-1})$.

Actually one can show that $\eta = d\omega \mapsto \int_{\S^k} \varphi^\ast(\omega)$ is an isomorphism on $H^{k+1}(\R^N \backslash \varphi(\S^k))$.
\end{remark}

\section*{Acknowledgement}
This text was mainly written while the author was preparing a lecture for the 19th Rencontres d'Analyse at UCLouvain in October 2016. He likes express his gratitude to UCLouvain and the organizers Pierre Bousquet, Jean Van Schaftingen, Augusto Ponce for the kind invitation and their hospitality.

\bibliographystyle{abbrv}
\bibliography{bib}%

\end{document}